\newtheorem{theorem}{Theorem}[section]
\newtheorem{proposition}{Proposition}[section]
\newtheorem{corollary}{Corollary}[section]
\newtheorem{lemma}{Lemma}[section]
\newtheorem{claim}{Claim}[section]
\theoremstyle{definition}
\newtheorem{observation}{Observation}[section]
\newtheorem{question}{Question}[section]
\newtheorem{definition}{Definition}[section]
\DeclareMathOperator{\lng}{long}
\DeclareMathOperator{\cy}{cy}
\title{Spectral faux trees}
\author{Steve Butler\footnote{Iowa State University, Ames, IA 50011, USA, \texttt{\{butler,joeljef}@iastate.edu}\and 
Elena D'Avanzo\footnote{Carleton College, Northfield, MN 55057, USA, \texttt{davanzoe@carleton.edu}} \and 
Rachel Heikkinen\footnote{Augustana College, Rock Island, IL, 61201, USA, \texttt{rachelheikkinen19@augustana.edu}} \and 
Joel Jeffries\footnotemark[1] \and 
Alyssa Kruczek\footnote{Susquehanna University, Selinsgrove, PA 17870, USA, \texttt{a.kruczek0413@gmail.com}} \and
Harper Niergarth\footnote{University of Minnesota--Twin Cities, Minneapolis, MN 55455, USA, \texttt{nierg001@umn.edu}}}
\date{\empty}
\begin{document}

\maketitle

\begin{abstract}
A spectral faux tree with respect to a given matrix is a graph which is not a tree but is cospectral with a tree for the given matrix. We consider the existence of spectral faux trees for several matrices, with emphasis on constructions.

For the Laplacian matrix, there are no spectral faux trees. For the adjacency matrix, almost all trees are cospectral with a faux tree. For the signless Laplacian matrix, spectral faux trees can only exist when the number of vertices is of the form $n=4k$. For the normalized adjacency, spectral faux trees exist when the number of vertices $n\ge 4$, and we give an explicit construction for a family whose size grows exponentially with $k$ for $n=\alpha k+1$ where $\alpha$ is fixed.
\end{abstract}

\section{Introduction}\label{sec:introduction}
Kac \cite{kac} posed the famous question of whether you can hear the shape of a drum. For spectral graph theory, this question is often phrased in terms of whether a graph is uniquely determined by its spectrum for a given matrix (see \cite{vandam}). The problem in general is wide open, though numerous special graphs have been shown to be determined by their spectrum, and at the same time many special constructions of families of graphs with the same spectrum are known. One of the strongest results in this direction is due to Schwenk \cite{schwenk} who was able to show that almost all trees share a spectrum with a different tree with respect to the adjacency matrix.

We will consider the situation of trees being cospectral with non-trees for several different matrices. In particular, we are interested in what we term spectral faux trees, which are defined below. Note that all graphs considered in this paper, unless stated otherwise, are simple graphs.

\begin{definition}
A \emph{spectral faux tree with respect to a matrix} is a graph which is not a tree but has the same spectrum (eigenvalues, including multiplicity) as some tree with respect to the matrix.
\end{definition}

We will be considering several of the more commonly studied matrices, the adjacency ($A$, which is a $0$-$1$ matrix with rows/columns indexed by vertices and whose entries indicate whether corresponding vertices are adjacent), the Laplacian ($L=D-A$, where $D$ is the diagonal matrix of degrees), the signless Laplacian ($Q=D+A$), and the normalized adjacency matrix ($\mathcal{A}=D^{-1/2}AD^{-1/2}$). Each of these matrices has different strengths (i.e.\ ability to discern graph structures) as well as different weaknesses (i.e.\ examples of differing graphs with the same spectrum) (see \cite{haemers,butler}). Therefore, it is not surprising that when it comes to spectral faux trees, there is a significant difference in the behavior of the matrices. We summarize our results here.
\begin{itemize}
\item[$A$:] For the adjacency matrix, almost all trees are cospectral with a non-tree, and in some sense there are many more spectral faux trees then there are trees. (Section~\ref{sec:ALS})
\item[$L$:] For the Laplacian matrix, there do not exist any spectral faux trees. (Section~\ref{sec:ALS})
\item[$Q$:] For the signless Laplacian matrix, spectral faux trees can only exist if the number of vertices is of the form $n=4k$. Moreover, for each such $n$ there do exist spectral faux trees, and the number of spectral faux trees grows exponentially as a function of $k$. (Section~\ref{sec:ALS})
\item[$\mathcal{A}$:] For the normalized adjacency matrix (equivalently the normalized Laplacian, or probability transition matrix), spectral faux trees exist for any number of vertices $n\ge4$. Moreover, we construct an explicit family of faux trees for $n=\alpha k+1$ vertices, for fixed $\alpha$, which grows exponentially as a function of $k$. (Section \ref{sec:branching})
\end{itemize}

To get a sense of the growth of spectral faux trees, in Table~\ref{tab:AA} we give the number of trees, spectral faux trees, and trees which are cospectral with a non-tree for the matrices $A$ and $\mathcal{A}$ for $4\le n\le14$. In Table~\ref{tab:Q}, we give similar data for the matrix $Q$ for $n=4,8,12,16$. This data reinforces the above comment that there is significant difference in the behavior of the matrices.

\begin{table}[!htb]
\centering
\begin{tabular}{|l||r|r|r|r|r|r|r|r|r|r|r|r|r|} \hline
\hfill$n=$&4&5&6&7&8&9&10&11&12&13&14\\ \hline
\#Trees&2&3&6&11&23&47&106&235&551&1301&3159\\ \hline \hline
\#Faux-trees ($A$)&0&1&1&6&5&30&67&244&480&1843&4226 \\ \hline
\#Trees cospectral&0&1&1&6&5&24&50&140&291&838&1882 \\[-3pt] 
\phantom{\#}w/ non-tree ($A$)&&&&&&&&&&&\\ \hline \hline
\#Faux-trees ($\mathcal{A}$)&1&1&2&4&3&8&11&27&9&119&20 \\ \hline
\#Trees cospectral&1&1&1&2&1&4&7&11&5&59&10 \\[-3pt] 
\phantom{\#}w/ non-tree ($\mathcal{A}$)&&&&&&&&&&&\\ \hline
\end{tabular}
\caption{Data related to spectral faux trees for $A$ and $\mathcal{A}$ on $n$ vertices.}
\label{tab:AA}
\end{table}

\begin{table}[!htb]
\centering
\begin{tabular}{|l||r|r|r|r|}\hline
\hfill$n=$&$4$&$8$&$12$&$16$ \\ \hline
\#Trees & 2& 23& 551& 19320\\ \hline\hline
\#Faux trees&1 &2 & 9&$48$\\ \hline
\#Trees cospectral &1 &2 & 9&$34$\\[-3pt]
 \phantom{\#}w/ non-tree&&&&\\\hline

\end{tabular}
\caption{Data related to spectral faux trees for $Q$ on $n$ vertices.}
\label{tab:Q}
\end{table}

We will proceed as follows. In Section~\ref{sec:ALS} we will discuss spectral faux trees for the adjacency, Laplacian, and signless Laplacian matrices. For these matrices, the results primarily follow from existing known techniques. We then turn our attention to the normalized adjacency in Section~\ref{sec:branching} and show how ``ornamented'' binary trees form a large family of graphs which are cospectral, where one is a tree and one is a non-tree. Finally, we give some concluding comments in Section~\ref{sec:conclusion}.

\section{Spectral faux trees for the adjacency, Laplacian, and signless Laplacian}\label{sec:ALS}

\subsection{Adjacency matrix}
For the adjacency matrix, we can modify the work of Schwenk \cite{schwenk} who found a construction showing that almost all trees are cospectral with another tree. In particular, we will need the following two results from Schwenk related to coalescing (combining two graphs together by identifying a common vertex and merging the graphs at that vertex).

\begin{theorem}[{Schwenk \cite{schwenk}}]\label{thm:schwenk1}
Given a rooted tree $T$ on $k$ vertices, then as $n\to\infty$, with high probability, given a tree $S$ on $n$ vertices, $T$ exists as a ``limb'' of $S$ (in other words, $S$ can be formed by coalescing $T$ at its root onto a smaller tree).
\end{theorem}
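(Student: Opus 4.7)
The plan is to work with rooted trees via P\'olya enumeration. First, I would reduce to the rooted case: every unrooted tree has a canonical centroid (one or two vertices), and the ratio of the number of rooted to unrooted trees of a given size is only a polynomial factor, so it suffices to prove that a uniformly random rooted tree on $n$ vertices contains $T$ as a limb with high probability (where a limb means a rooted subtree consisting of all descendants of some non-root vertex).

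Let $r(x) = \sum_{n\ge 1} r_n x^n$ be the ordinary generating function for rooted trees. By P\'olya's enumeration it satisfies
\[
r(x) = x\prod_{m\ge 1}\frac{1}{(1-x^m)^{r_m}},
\]
and Otter showed that $r(x)$ has a finite radius of convergence $\rho$ with $r_n \sim c\,\rho^{-n} n^{-3/2}$. I would then introduce the auxiliary generating function $a(x) = \sum_{n\ge 1} a_n x^n$ enumerating rooted trees in which $T$ does \emph{not} appear as a limb; this satisfies the same P\'olya-type equation except with the factor corresponding to the rooted isomorphism type $T$ removed, reflecting that $T$ is forbidden at every non-root vertex as a subtree of descendants.

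A standard singularity analysis---for instance via the Drmota--Lalley--Woods theorem for implicit functions defined by P\'olya-type equations---would then show that the radius of convergence $\rho'$ of $a(x)$ is strictly greater than $\rho$, so $a_n/r_n = O\bigl((\rho/\rho')^n\bigr) \to 0$ exponentially quickly. Passing back to unrooted trees via Otter's relation $t(x) = r(x) - \tfrac{1}{2}\bigl(r(x)^2 - r(x^2)\bigr)$ preserves this exponential decay and yields the theorem.

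The main obstacle is establishing the strict inequality $\rho' > \rho$. Intuitively, forbidding a single rooted isomorphism type from ever appearing as a descendant subtree is a strong restriction that should noticeably slow growth, but making this rigorous requires careful analytic tracking of how the removal of one building block shifts the dominant singularity of the implicit function defined by the P\'olya equation. Once that step is in place, the polynomial overhead in switching between the rooted and unrooted enumerations is harmless, and the ``with high probability'' conclusion is immediate.
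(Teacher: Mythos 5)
The paper does not actually prove this statement---it is quoted as a theorem of Schwenk, with \cite{schwenk} as the source---so there is no internal proof to compare against. Your outline is, in essence, the architecture of Schwenk's own argument: pass to rooted trees, write the P\'olya functional equation for the generating function $a(x)$ of limb-avoiding rooted trees, and show that its dominant singularity lies strictly beyond that of $r(x)$. The reduction to rooted trees is fine (each unrooted tree on $n$ vertices has between $1$ and $n$ rootings, and containing $T$ as a limb is preserved under rooting, so the polynomial factor is absorbed by the exponential decay; the centroid is not needed), and your functional equation is the right one: $a(x)=(1-x^k)\,x\prod_{m\ge1}(1-x^m)^{-a_m}$ where $k=|T|$, since each principal subtree at the root must itself avoid $T$ as a limb and must not be isomorphic to $T$.

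The genuine gap is the one you name yourself: the strict inequality $\rho'>\rho$ is the entire analytic content of the theorem, and ``a standard singularity analysis would then show'' does not establish it. (Also, Drmota--Lalley--Woods as usually stated concerns polynomial systems, not the P\'olya equation with its $r(x^m)$ terms, so even the machinery you invoke needs adaptation.) The missing idea that closes the gap is a concrete comparison at the singularity: writing $r=\Phi(x,r)$ with $\Phi(x,y)=x\,e^{y}\exp\bigl(\sum_{i\ge2}r(x^i)/i\bigr)$, the square-root singularity occurs where $\Phi_y=1$, which here is equivalent to $r(\rho)=1$; the corresponding criterion for $a$ is $a(\rho')=1$. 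Since the coalescence of $T$ with a single edge is a rooted tree containing $T$ as a limb, $a_{k+1}<r_{k+1}$, so $a(\rho)<r(\rho)=1$ (the series for $a$ converges at $\rho$ by coefficientwise comparison), and the implicit function theorem then continues $a$ analytically past $\rho$ until it reaches the value $1$ at some strictly larger $\rho'$. Without this step the proposal is a correct plan rather than a proof; with it, the exponential decay $a_n/r_n=O((\rho/\rho')^n)$ and hence the ``with high probability'' conclusion follow as you describe.
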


\begin{theorem}[{Schwenk \cite{schwenk}}]\label{thm:schwenk2}
Let $G_1$ and $G_2$ be rooted graphs with roots $v_1$ and $v_2$, respectively. If $G_1$ and $G_2$ are cospectral and $G_1-v$ and $G_2-v$ are cospectral, then for any rooted graph $H$ with root $u$, coalescing $G_1$ onto $H$ by identifying $u$ and $v_1$ will be cospectral with the graph resulting from coalescing $G_2$ onto $H$ by identifying $u$ and $v_2$.
\end{theorem}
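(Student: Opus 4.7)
The plan is to prove this via an identity of Schwenk that expresses the characteristic polynomial of a coalescence in terms of the characteristic polynomials of the constituent rooted graphs and their vertex-deleted subgraphs. Concretely, if $F$ is obtained by coalescing rooted graphs $(G,v)$ and $(H,u)$, then one has
\[
\phi(F,x) \;=\; \phi(G,x)\,\phi(H-u,x) \;+\; \phi(G-v,x)\,\phi(H,x) \;-\; x\,\phi(G-v,x)\,\phi(H-u,x).
\]
A quick sanity check (e.g.\ coalescing two copies of $K_2$ at a vertex to obtain $P_3$) confirms the identity: both sides equal $x^3-2x$.

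First, I would establish this identity. The cleanest route is to expand $\det(xI-A(F))$ along the row and column indexed by the coalesced vertex $w = v = u$. Because $w$'s neighborhood in $F$ is the disjoint union of its neighborhoods in $G$ and in $H$, the expansion decomposes cleanly: the diagonal term contributes $x \cdot \phi(G-v,x)\phi(H-u,x)$, and the contributions from each neighbor of $w$ split into a part living entirely in $G$ and a part living entirely in $H$. Regrouping these contributions using the analogous expansions of $\phi(G,x)$ and $\phi(H,x)$ yields the formula above. Alternatively, one can invoke the Sachs-style subgraph sum for $\phi$ and observe that every elementary spanning configuration of $F$ either (i) avoids $w$, (ii) pairs $w$ with a neighbor on the $G$-side, or (iii) pairs $w$ with a neighbor on the $H$-side; collecting terms by case produces the same identity.

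With the identity in hand, the theorem follows immediately. Let $F_i$ denote the coalescence of $G_i$ (rooted at $v_i$) with $H$ (rooted at $u$) for $i=1,2$. Applying the identity,
\[
\phi(F_i,x) \;=\; \phi(G_i,x)\,\phi(H-u,x) + \phi(G_i-v_i,x)\,\phi(H,x) - x\,\phi(G_i-v_i,x)\,\phi(H-u,x).
\]
By hypothesis, $\phi(G_1,x)=\phi(G_2,x)$ and $\phi(G_1-v_1,x)=\phi(G_2-v_2,x)$, while $\phi(H,x)$ and $\phi(H-u,x)$ are common to both expressions. Hence $\phi(F_1,x)=\phi(F_2,x)$ and $F_1$ is cospectral with $F_2$.

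The main obstacle is the derivation of the coalescence identity; once it is verified, the rest of the proof is a single substitution. A minor care point is to ensure the hypothesis ``$G_1-v$ and $G_2-v$ are cospectral'' is interpreted as deleting the respective roots $v_1,v_2$, which is the only interpretation consistent with the formula (and with Schwenk's original statement).
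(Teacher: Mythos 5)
The paper does not prove this statement; it is imported verbatim from Schwenk and used as a black box, so there is no in-paper argument to compare against. Your proposal is essentially Schwenk's original proof: the coalescence identity
\[
\phi(F,x)=\phi(G,x)\,\phi(H-u,x)+\phi(G-v,x)\,\phi(H,x)-x\,\phi(G-v,x)\,\phi(H-u,x)
\]
is the correct and standard formula, your sanity check is right, and once it is in hand the theorem is indeed a one-line substitution using the two cospectrality hypotheses. Two small cautions on the derivation of the identity itself. First, a naive Laplace expansion of $\det(xI-A(F))$ along the row and column of the coalesced vertex does not hand you cofactors that are themselves characteristic polynomials of vertex-deleted subgraphs; the clean route is the vertex-deletion recurrence $\phi(G,x)=x\,\phi(G-v,x)-\sum_{w\sim v}\phi(G-v-w,x)-2\sum_{Z\ni v}\phi(G-Z,x)$ (sum over cycles $Z$ through $v$), applied to $F$ at the coalesced vertex and then to $G$ and $H$ at their roots, using that $F-w$ is the disjoint union $(G-v)\cup(H-u)$ and that every neighbor of $w$ and every cycle through $w$ lies entirely on one side. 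Second, your Sachs-style case analysis omits the configurations in which $w$ is covered by a cycle of length at least $3$ rather than by an edge; such cycles also live entirely in $G$ or entirely in $H$, so the argument survives, but the trichotomy as you stated it is incomplete. With either derivation made precise, the proof is correct.
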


In brief, the idea of Schwenk was if two small rooted graphs satisfy Theorem~\ref{thm:schwenk2}, then one can swap $G_1$ and $G_2$ and still be cospectral. Then for two such rooted trees, Theorem~\ref{thm:schwenk1} shows that almost all trees contain one of those rooted trees as a limb which can be swapped with the other to form a new cospectral tree. We are now ready to prove the following.

\begin{theorem}
Almost every tree is cospectral with a non-tree (a spectral faux tree) with respect to the adjacency matrix.
\end{theorem}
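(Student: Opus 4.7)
The plan is to mimic Schwenk's strategy for producing cospectral trees, but with a rooted pair in which only one member is a tree. The first step is to exhibit a small rooted tree $(T,v_T)$ on $k$ vertices together with a small rooted non-tree $(U,v_U)$ on the same number of vertices (so $U$ is connected but contains at least one cycle) such that $T$ and $U$ are cospectral and $T-v_T$ and $U-v_U$ are cospectral. The existence of a spectral faux-tree pair for small $n$ is already visible in Table~\ref{tab:AA}, so such candidates exist; the extra task is to choose the roots so that the vertex-deletion cospectrality required by Theorem~\ref{thm:schwenk2} holds.

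Once such a pair $(T,U)$ is in hand, the rest of the argument is essentially automatic. Let $S$ be a tree on $n$ vertices. By Theorem~\ref{thm:schwenk1}, with probability tending to $1$ as $n\to\infty$ the tree $S$ contains $T$ as a limb; equivalently, $S$ is obtained by coalescing $T$ at $v_T$ onto some smaller rooted tree $H$ at a vertex $u$. Apply Theorem~\ref{thm:schwenk2} to $(T,v_T)$, $(U,v_U)$, and $(H,u)$: the graph $S'$ obtained from $S$ by detaching the limb $T$ and coalescing $U$ onto $H$ at $v_U$ in its place is cospectral with $S$. Because $U$ contains a cycle and that cycle is preserved by coalescing (which only identifies the root with $u$), $S'$ also contains a cycle, and in particular $S'$ is not a tree. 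Thus $S'$ is a spectral faux tree cospectral with $S$, which gives the theorem.

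The one non-routine step, and the main obstacle, is the initial production of the rooted pair $(T,U)$. Not every cospectral tree/non-tree pair will do: Theorem~\ref{thm:schwenk2} demands cospectrality of the vertex-deleted graphs at the chosen roots, not merely cospectrality of the ambient graphs. My plan is to carry out a small finite search, inspecting the cospectral faux-tree pairs of modest order (the table guarantees candidates already at $n=5,6,7$) and testing each choice of root in each pair for the additional deletion condition; as soon as one valid rooted pair is found, the argument above closes. One can also guide the search by looking for $U$ consisting of a cycle with some pendant structure attached, so that a pendant vertex serves as the root and $U-v_U$ is a forest whose characteristic polynomial can be compared by hand with that of $T-v_T$.
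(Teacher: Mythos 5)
Your strategy is exactly the paper's: find a rooted tree and rooted non-tree that are cospectral and remain cospectral after deleting the roots, then invoke Theorems~\ref{thm:schwenk1} and~\ref{thm:schwenk2} to swap the limb in almost every tree. The reduction itself is carried out correctly, including the observation that the cycle of $U$ survives coalescence so the resulting graph is not a tree.

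The one thing you have not actually supplied is the witness pair, and as written the proof is therefore incomplete: you correctly identify this as the crux, but ``a finite search will find one'' is a promise, not a proof. The paper discharges this step by exhibiting the pair in Figure~\ref{fig:A_cospec_pair}: the tree is the $7$-vertex spider obtained from the path $C\hbox{-}B\hbox{-}G\hbox{-}F\hbox{-}E$ by attaching the path $G\hbox{-}A\hbox{-}D$, rooted at the leaf $B$, and the non-tree is $C_6\cup K_1$ rooted at a cycle vertex; deleting the roots leaves $P_5\cup K_1$ on both sides (for the cycle this is immediate, and for the spider the deleted graph is the path $E\hbox{-}F\hbox{-}G\hbox{-}A\hbox{-}D$ together with the isolated vertex $C$), so both hypotheses of Theorem~\ref{thm:schwenk2} hold. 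Two cautions about your proposed search: first, your heuristic of taking $U$ connected, ``a cycle with pendant structure, rooted at a pendant vertex,'' would not find this witness, since the paper's $U$ is disconnected and is rooted on the cycle itself; second, Table~\ref{tab:AA} only certifies that cospectral tree/non-tree pairs exist, not that any of them admits roots satisfying the deletion condition, so the table cannot substitute for producing the rooted pair. Once the explicit pair is inserted, your argument matches the paper's.
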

\begin{proof}
Using Theorems~\ref{thm:schwenk1} and \ref{thm:schwenk2}, it suffices to find a rooted tree which is cospectral with a rooted non-tree, and where removal of the root leaves two cospectral graphs. Such a pair is shown in Figure~\ref{fig:A_cospec_pair}. 
\end{proof}

\begin{figure}[htp!]
\begin{center}
\begin{tikzpicture}[scale=0.8]

    \pgfmathsetmacro{\r}{1};

    \node[thick,inner sep=2pt,thick,shape=circle,draw=black] (A) at (0,0) {};
    \node[thick,inner sep=3pt,thick,shape=circle,draw=black, fill=white!70!red] (B) at (60+30:\r) {};
    \node[thick,inner sep=2pt,thick,shape=circle,draw=black] (C) at (120+30:\r) {};
    \node[thick,inner sep=2pt,thick,shape=circle,draw=black] (D) at (180+30:\r) {};
    \node[thick,inner sep=2pt,thick,shape=circle,draw=black] (E) at (240+30:\r) {};
    \node[thick,inner sep=2pt,thick,shape=circle,draw=black] (F) at (300+30:\r) {};
    \node[thick,inner sep=2pt,thick,shape=circle,draw=black] (G) at (0+30:\r) {};
    
    \path [thick,-] (B) edge (C);
    \path [thick,-] (E) edge (F);
    \path [thick,-] (F) edge (G);
    \path [thick,-] (G) edge (B);
    \path [thick,-] (G) edge (A);
    \path [thick,-] (D) edge (A);
    
\end{tikzpicture}
\hspace{3em} 
\begin{tikzpicture}[scale=0.8]

    \pgfmathsetmacro{\r}{1};

    \node[thick,inner sep=2pt,thick,shape=circle,draw=black] (A) at (0,0) {};
    \node[thick,inner sep=3pt,thick,shape=circle,draw=black, fill=white!70!red] (B) at (60+30:\r) {};
    \node[thick,inner sep=2pt,thick,shape=circle,draw=black] (C) at (120+30:\r) {};
    \node[thick,inner sep=2pt,thick,shape=circle,draw=black] (D) at (180+30:\r) {};
    \node[thick,inner sep=2pt,thick,shape=circle,draw=black] (E) at (240+30:\r) {};
    \node[thick,inner sep=2pt,thick,shape=circle,draw=black] (F) at (300+30:\r) {};
    \node[thick,inner sep=2pt,thick,shape=circle,draw=black] (G) at (0+30:\r) {};

    \path [thick,-](B) edge (C);
    \path [thick,-](C) edge (D);
    \path [thick,-](D) edge (E);
    \path [thick,-] (E) edge (F);
    \path [thick,-] (F) edge (G);
    \path [thick,-] (G) edge (B);
    
\end{tikzpicture}
\end{center}
\caption{A special rooted $A$-cospectral tree and faux tree pair.}
\label{fig:A_cospec_pair}
\end{figure}
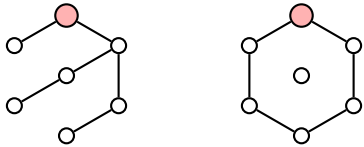

Examination of the work of Schwenk \cite{schwenk} actually indicates that for Theorem~\ref{thm:schwenk1}, not only is there likely one limb to swap, there are many limbs available. In particular, this indicates that every tree is cospectral with many non-trees. Then as $n$ gets large, the number of spectral faux trees with respect to the adjacency matrix should grow larger than the number of trees. This is reflected in Table~\ref{tab:AA}.

\subsection{Laplacian matrix}
For the Laplacian matrix, we will rely on the following two well-known facts about the spectrum of the Laplacian matrix (see \cite{butler}).

\begin{proposition}\label{fact:L1}
The sum of the eigenvalues of $L$ is twice the number of edges of the graph.
\end{proposition}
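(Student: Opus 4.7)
The plan is to use the standard identity that the sum of the eigenvalues of a symmetric (or more generally, any square) matrix equals its trace. So the first step is to note that $\sum_i \lambda_i(L) = \operatorname{tr}(L)$.

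Next I would compute $\operatorname{tr}(L)$ directly from the definition $L = D - A$. Since $A$ is the adjacency matrix of a simple graph, its diagonal entries are all $0$ (no loops), so $\operatorname{tr}(A) = 0$. Therefore $\operatorname{tr}(L) = \operatorname{tr}(D) = \sum_v \deg(v)$.

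Finally, I would invoke the handshake lemma: $\sum_v \deg(v) = 2|E(G)|$, since each edge contributes to the degree of exactly two vertices. Chaining these equalities gives $\sum_i \lambda_i(L) = 2|E(G)|$, as claimed.

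There is really no obstacle here — the result is a one-line consequence of trace equals sum of eigenvalues, combined with the handshake lemma. The only care needed is the assumption that the graph is simple (so $A$ has zero diagonal); if loops were allowed, diagonal entries of $A$ would need to be accounted for, but the paper has already stipulated that all graphs are simple unless stated otherwise.
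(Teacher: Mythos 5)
Your proof is correct and is the standard argument: the paper states this proposition as a well-known fact with a citation and gives no proof of its own, and your chain $\sum_i \lambda_i(L) = \operatorname{tr}(L) = \operatorname{tr}(D) = \sum_v \deg(v) = 2|E(G)|$ is exactly the expected justification. Your remark about the simple-graph hypothesis (so that $\operatorname{tr}(A)=0$) is the right caveat and matches the paper's standing assumption.
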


\begin{proposition}\label{fact:L2}
The multiplicity of $0$ as an eigenvalue of $L$ is the number of connected components of the graph.
\end{proposition}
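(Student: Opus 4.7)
The plan is to use the positive semidefiniteness of $L$ together with a direct description of its kernel. The key observation is the quadratic-form identity
\[
x^{T}Lx=\sum_{ij\in E}(x_i-x_j)^2 \qquad \text{for all } x\in\mathbb{R}^n,
\]
which I would derive by writing $L=D-A$ entrywise: the diagonal part contributes $\sum_i d_i x_i^2=\sum_{ij\in E}(x_i^2+x_j^2)$ and the off-diagonal part contributes $-2\sum_{ij\in E}x_i x_j$, and these combine to the claimed sum of squares. This identity immediately shows that $L$ is positive semidefinite, so all eigenvalues are non-negative and the multiplicity of $0$ equals $\dim\ker L$.

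Next, I would characterize $\ker L$. Since $L$ is symmetric and positive semidefinite, $Lx=0$ if and only if $x^{T}Lx=0$, which by the identity happens if and only if $x_i=x_j$ for every edge $ij$. Propagating these equalities along paths shows that $x$ is forced to be constant on each connected component, and conversely any vector which is constant on each component annihilates every edge term and hence lies in the kernel.

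Finally, I would count the dimension. Writing the connected components as $C_1,\dots,C_c$, the characteristic vectors $\mathbf{1}_{C_1},\dots,\mathbf{1}_{C_c}$ have pairwise disjoint supports, so they are linearly independent, and by the previous paragraph they span $\ker L$. Therefore $\dim\ker L = c$, which is exactly the claimed statement.

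There is no substantive obstacle in this argument; the only step that deserves a little care is the transition from ``$x^{T}Lx=0$'' to ``$Lx=0$,'' which is where positive semidefiniteness (equivalently, the factorization of $L$ as a Gram matrix of the signed incidence matrix) is used. Once that is in hand, the rest is routine linear algebra.
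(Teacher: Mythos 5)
Your proof is correct and complete: the quadratic-form identity $x^{T}Lx=\sum_{ij\in E}(x_i-x_j)^2$, the resulting positive semidefiniteness, and the identification of $\ker L$ with the span of the component indicator vectors is exactly the standard argument for this fact. The paper does not prove this proposition at all---it is stated as a well-known fact with a citation---so your write-up simply supplies the classical proof that the reference would contain.
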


\begin{theorem}
There are no $L$-spectral faux trees.
\end{theorem}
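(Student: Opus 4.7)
The plan is to use the two propositions to force any graph $L$-cospectral with a tree to itself be a tree. Let $T$ be a tree on $n$ vertices and suppose $G$ is a graph with the same Laplacian spectrum as $T$; I want to show $G = T$ up to isomorphism or at least that $G$ is a tree.

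First, I would extract two invariants from the spectrum. Since $T$ has $n-1$ edges, Proposition~\ref{fact:L1} (sum of eigenvalues equals twice the number of edges) forces $G$ to have exactly $n-1$ edges. Since $T$ is connected, $0$ appears as an eigenvalue of $L(T)$ with multiplicity exactly $1$, so by Proposition~\ref{fact:L2} the graph $G$ also has exactly one connected component, i.e.\ $G$ is connected.

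The final step is the standard combinatorial fact that a connected graph on $n$ vertices with $n-1$ edges is a tree (equivalently, it has no cycles, since any cycle would force at least $n$ edges in a connected graph). Therefore $G$ is a tree, contradicting the assumption that $G$ is a spectral faux tree.

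The proof has no real obstacle: both invariants (edge count and number of components) are directly readable from the Laplacian spectrum, and together they exactly characterize trees. The only thing to be mindful of is that the argument requires $T$ to have the same vertex count as $G$, but this is automatic since cospectral matrices have the same size.
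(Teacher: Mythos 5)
Your proof is correct and follows exactly the same route as the paper: read off the edge count from Proposition~\ref{fact:L1} and connectivity from Proposition~\ref{fact:L2}, then invoke the characterization of trees as connected graphs on $n$ vertices with $n-1$ edges. Your write-up is just a slightly more explicit version of the paper's argument.
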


\begin{proof}
Using Propositions~\ref{fact:L1} and \ref{fact:L2}, the spectrum of the Laplacian can determine the number of edges and if a graph is connected. Since one characterization of trees on $n$ vertices are being connected and having $n-1$ edges, the spectrum can determine if a graph is a tree. So, there are no spectral faux trees for $L$.
\end{proof}

\subsection{Signless Laplacian matrix}
The signless Laplacian is a bit more subtle, and so we will need a bigger collection of facts. Most of these can again be found in standard references (e.g.\ \cite{butler}).

\begin{proposition}\label{prop:Qedges}
The sum of the eigenvalues of $Q$ is twice the number of edges of the graph.
\end{proposition}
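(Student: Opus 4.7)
The plan is to reduce the sum of eigenvalues of $Q$ to its trace and then compute that trace directly from the definition $Q=D+A$. Recall that for any real symmetric matrix, the sum of the eigenvalues (counted with multiplicity) equals the trace, so it suffices to show $\operatorname{tr}(Q) = 2|E(G)|$.

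First I would split the trace as $\operatorname{tr}(Q) = \operatorname{tr}(D) + \operatorname{tr}(A)$ using linearity. Since the graphs considered are simple (no loops), the diagonal of the adjacency matrix is identically zero, so $\operatorname{tr}(A)=0$. The remaining contribution is $\operatorname{tr}(D) = \sum_{v} \deg(v)$, and by the handshaking lemma this degree sum equals $2|E(G)|$. Combining these observations gives the stated identity.

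There is no real obstacle here; the only thing to be careful about is the simple-graph hypothesis, which is explicitly assumed at the start of the paper and which guarantees $\operatorname{tr}(A)=0$. The same argument, incidentally, works for the Laplacian $L=D-A$ and recovers Proposition~\ref{fact:L1}, which is why both matrices share this ``trace $=2|E|$'' feature even though their spectra differ substantially in other respects.
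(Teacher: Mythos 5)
Your proof is correct and is the standard trace argument; the paper itself gives no proof of this proposition, deferring to standard references, and the argument found there is exactly the one you give (sum of eigenvalues equals $\operatorname{tr}(Q)=\operatorname{tr}(D)+\operatorname{tr}(A)=\sum_v\deg(v)+0=2|E|$). Nothing is missing.
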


\begin{proposition}\label{prop:Qcomp}
The multiplicity of $0$ as an eigenvalue of $Q$ is the number of connected \emph{bipartite} components of the graph.
\end{proposition}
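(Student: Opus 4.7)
The plan is to exploit the standard factorization $Q = BB^{T}$, where $B$ is the $n\times m$ (unsigned) vertex-edge incidence matrix whose entry $B_{v,e}$ equals $1$ when $v$ is an endpoint of $e$ and $0$ otherwise. A direct computation of $BB^{T}$ recovers $D+A = Q$, since the $(v,v)$ entry counts the edges at $v$ and the $(u,v)$ entry counts the edges joining $u$ and $v$. In particular, $Q$ is positive semidefinite, so $0$ is its smallest eigenvalue and its multiplicity equals $\dim\ker Q$.

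The first main step is to observe that $\ker Q = \ker B^{T}$. The inclusion $\ker B^{T} \subseteq \ker Q$ is immediate, and conversely if $Qx = 0$ then $x^{T}Qx = \|B^{T}x\|^{2} = 0$, forcing $B^{T}x=0$. Next, I would unpack what $B^{T}x = 0$ means edge by edge: for every edge $e=\{u,v\}$ we get $x_{u} + x_{v} = 0$, i.e.\ $x_{v} = -x_{u}$ whenever $u \sim v$. Since this condition only couples values within a connected component, $\ker Q$ decomposes as a direct sum over the components of $G$, and it suffices to determine, for each component $C$, the dimension of the space of vectors on $C$ satisfying this sign-flip condition along every edge.

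Fixing a component $C$ and a base vertex $v_{0}\in C$ with $x_{v_{0}} = c$, the edge condition propagates to force $x_{v} = (-1)^{\ell} c$ for every walk from $v_{0}$ to $v$ of length $\ell$. This assignment is well-defined on $C$ exactly when every closed walk in $C$ has even length, which happens precisely when $C$ is bipartite; in that case we get a one-dimensional family of solutions. If $C$ contains an odd cycle, then traversing it forces $c = -c$, so $c = 0$ and the only solution supported on $C$ is the zero vector.

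Summing over components, $\dim\ker Q$ equals the number of bipartite components, which is the claim. The only step requiring any care is the well-definedness argument in the bipartite case, since one must verify that the value $(-1)^{\ell}c$ is independent of the walk chosen; this reduces to the parity of closed walks, which is exactly the characterization of bipartiteness, so no separate obstacle arises.
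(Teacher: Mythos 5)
Your proof is correct and complete: the factorization $Q=BB^{T}$ with the unsigned incidence matrix, the identification $\ker Q=\ker B^{T}$ via positive semidefiniteness, and the sign-propagation argument on each component together give exactly the stated count. The paper states this proposition without proof, citing standard references, and the argument you give is the standard one found there, so there is nothing to reconcile.
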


\begin{proposition}\label{prop:same}
If a graph $G$ is bipartite, then $L$ and $Q$ have the same eigenvalues.
\end{proposition}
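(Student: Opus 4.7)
The plan is to exhibit an explicit similarity that conjugates $L$ into $Q$ whenever $G$ is bipartite. Since similar matrices share the same spectrum, producing such a similarity immediately yields the proposition.

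First I would fix a bipartition $V(G) = X \sqcup Y$ and introduce the diagonal signature matrix $S$ with $S_{vv} = +1$ for $v \in X$ and $S_{vv} = -1$ for $v \in Y$. Because $S^2 = I$, we have $S^{-1} = S$, so conjugation $M \mapsto SMS$ is an honest similarity transformation. Next I would compute $SAS$ entrywise: for any vertices $u,v$, $(SAS)_{uv} = S_{uu} A_{uv} S_{vv}$. If $uv \in E(G)$, bipartiteness guarantees that exactly one of $u,v$ lies in $X$ and the other in $Y$, so $S_{uu} S_{vv} = -1$ and therefore $(SAS)_{uv} = -A_{uv}$; if $uv \notin E(G)$ the entry is already zero. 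Hence $SAS = -A$. Since $D$ is diagonal, it commutes with $S$ and $SDS = D$.

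Combining these gives $SLS = S(D-A)S = D - (-A) = D + A = Q$, so $L$ and $Q$ are similar and thus cospectral. There is no real obstacle here: the only place bipartiteness is used is in the sign computation for $SAS$ (an edge lying inside $X$ or inside $Y$ would give $S_{uu}S_{vv} = +1$ and collapse the argument), and everything else is formal linear algebra.
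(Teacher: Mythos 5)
Your proof is correct and complete: the signature-matrix similarity $SLS = Q$ is exactly the standard argument, and you correctly isolate where bipartiteness is needed (the sign computation $S_{uu}S_{vv} = -1$ on edges). The paper states this proposition without proof, citing it as a well-known fact from standard references, and the argument you give is precisely the canonical one found there.
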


\begin{theorem}[Matrix Tree Theorem]\label{thm:MTT}
The product of the nonzero eigenvalues of $L$ for a graph on $n$ vertices is the product of $n$ and the number of spanning trees of the graph.
\end{theorem}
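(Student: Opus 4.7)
The plan is to deduce the eigenvalue product formulation from the classical cofactor form of the Matrix Tree Theorem together with the standard identification of characteristic polynomial coefficients with sums of principal minors. So the proof naturally splits into two stages: first prove the cofactor version, and then translate it into a statement about eigenvalues.

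For the cofactor step, I would orient the edges of $G$ arbitrarily and form the signed incidence matrix $B$, whose rows are indexed by vertices and columns by edges, with entries $+1$ and $-1$ at the tail and head of each edge. A direct calculation shows $L = BB^T$. Deleting a row corresponding to any vertex $v$ yields $B_v$, and since $L\mathbf{1}=0$ the principal minor $\det L_v$ equals the full cofactor along $v$. Applying the Cauchy--Binet formula gives
\[
\det L_v \;=\; \sum_{S} \bigl(\det B_v[\,\cdot\,,S]\bigr)^2,
\]
where $S$ ranges over $(n-1)$-element subsets of edges. The combinatorial core of the argument is to show that each such determinant is $0$ unless the edges of $S$ form a spanning tree of $G$, in which case it is $\pm 1$; this is verified by reordering the columns of $B_v[\,\cdot\,,S]$ into a triangular form using a leaf-stripping argument on the spanning tree. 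Hence $\det L_v = \tau(G)$, the number of spanning trees, independent of the choice of $v$.

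For the eigenvalue step, since $\mathbf{1}\in\ker L$ we can factor the characteristic polynomial as
\[
\det(xI-L) \;=\; x\prod_{i=1}^{n-1}(x-\lambda_i),
\]
where $\lambda_1,\dots,\lambda_{n-1}$ are the nonzero eigenvalues. The coefficient of $x$ on the right is $(-1)^{n-1}\prod_{i=1}^{n-1}\lambda_i$. On the left, the general identity expressing $\det(xI-L)$ in terms of principal minors says that the coefficient of $x$ is $(-1)^{n-1}$ times the sum of all $(n-1)\times(n-1)$ principal minors of $L$. By the cofactor step, each of the $n$ such minors equals $\tau(G)$, so the sum is $n\tau(G)$. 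Equating the two expressions gives $\prod_{i=1}^{n-1}\lambda_i = n\tau(G)$.

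The main obstacle is the combinatorial Cauchy--Binet computation: certifying that every nonzero $(n-1)\times(n-1)$ minor of the incidence matrix evaluates to $\pm 1$ and corresponds bijectively to a spanning tree. Once that is in hand, the remainder is standard linear algebra: extracting the linear coefficient of the characteristic polynomial and matching it against the elementary symmetric function of the nonzero eigenvalues.
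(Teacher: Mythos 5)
Your proposal is correct and is the standard proof: the incidence-matrix factorization $L=BB^{T}$ plus Cauchy--Binet gives the cofactor form, and extracting the linear coefficient of $\det(xI-L)$ as the sum of the $(n-1)\times(n-1)$ principal minors converts it to the eigenvalue form. The paper offers no proof of this statement at all---it is cited as the classical Matrix Tree Theorem---so there is nothing to contrast with; the only caveat worth recording is that your factorization $\det(xI-L)=x\prod_{i=1}^{n-1}(x-\lambda_i)$ presumes exactly one zero eigenvalue, i.e.\ a connected graph, which is the implicit hypothesis of the statement (and holds in the paper's application, where the theorem is applied to a tree).
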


\begin{theorem}[{Hassani Monfared and Mallik \cite{signless_matrix_tree}}]\label{thm:QMTT}
If $G$ is a connected odd unicyclic graph, then the product of the eigenvalues of $Q$ is $4$.
\end{theorem}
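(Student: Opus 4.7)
My plan is to express the signless Laplacian as $Q = MM^T$, where $M$ is the (unsigned) $n \times m$ vertex-edge incidence matrix of $G$ (with $M_{v,e} = 1$ if $v$ is an endpoint of $e$, and $0$ otherwise). Because $G$ is connected and unicyclic, it has $m = n$ edges, so $M$ is square, and the product of the eigenvalues of $Q$ equals $\det Q = (\det M)^2$. It therefore suffices to show $|\det M| = 2$.

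To compute $|\det M|$, I would induct on $n$ by stripping leaves. If $G$ is not itself a cycle, then, since the unique cycle has fewer than $n$ vertices, some tree-like portion must dangle off the cycle and $G$ must contain a leaf $v$. The row of $M$ indexed by $v$ has a single nonzero entry (corresponding to the unique edge at $v$), so cofactor expansion along this row gives $|\det M| = |\det M'|$, where $M'$ is the incidence matrix of $G - v$. Since $G - v$ is again connected and odd unicyclic (the odd cycle is untouched), the inductive hypothesis applies and the problem reduces to the case where $G$ is itself the odd cycle.

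The base case $G = C_{2k+1}$ is the main obstacle. Here I would either diagonalize $Q(C_{2k+1}) = 2I + A(C_{2k+1})$ using its circulant structure (the eigenvalues are $2 + 2\cos(2\pi j/(2k+1))$ for $0 \le j \le 2k$) and evaluate the product via a standard root-of-unity identity to get $\det Q = 4$; or, more combinatorially, expand $\det M$ along the row of a chosen vertex on the cycle and observe that the two resulting cofactors are incidence matrices of the two distinct spanning paths obtained by deleting each of the two edges at $v$. Each such path has a triangular-after-reordering incidence matrix with determinant $\pm 1$, and the two cofactor signs \emph{add} rather than \emph{cancel} precisely because the cycle length is odd (for an even cycle they would cancel, consistent with Proposition~\ref{prop:same} forcing $\det Q = 0$). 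This yields $|\det M(C_{2k+1})| = 2$, completing the induction and giving $\det Q = 4$.
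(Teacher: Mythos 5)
The paper does not prove this statement; it is imported verbatim from Hassani Monfared and Mallik \cite{signless_matrix_tree}, so there is no internal proof to compare against. Your argument is a correct, self-contained proof of the cited fact. The factorization $Q=MM^T$ with $M$ the unsigned incidence matrix is valid for simple graphs; a connected unicyclic graph has exactly $n$ edges, so $M$ is square and the product of the eigenvalues is $\det Q=(\det M)^2$; the leaf-stripping induction is sound (a connected unicyclic graph that is not a cycle has minimum degree $1$, and deleting a leaf removes one row and one column of $M$ while preserving connected odd unicyclicity); and the base case follows from $\prod_{j=0}^{2k}\bigl(2+2\cos(2\pi j/(2k+1))\bigr)=\bigl|\prod_{j}(1+\omega^j)\bigr|^2=4$, using $\prod_j(x-\omega^j)=x^{2k+1}-1$ at $x=-1$. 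One small imprecision in your combinatorial alternative for the base case: the cofactor obtained by deleting the row of $v$ and the column of one of its two cycle edges is not the incidence matrix of a spanning path (that matrix is $n\times(n-1)$, hence not square), but rather that matrix with the row of $v$ removed; ordering the remaining vertices along the path moving away from $v$ makes it triangular with unit diagonal, so the value $\pm1$ stands, and your sign analysis (the two cofactors reinforce for odd cycles and cancel for even ones, consistent with $\det Q=0$ for bipartite graphs via Proposition~\ref{prop:Qcomp}) checks out, e.g.\ $|\det M(C_3)|=2$ while $\det M(C_4)=0$. Either route to the base case completes a clean proof; the circulant computation is shorter, while the cofactor argument better explains \emph{why} oddness of the cycle is what produces the value $4$.
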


\begin{theorem}
If $G$ is a spectral faux tree with respect to $Q$, then it has $n=4k$ vertices.
\end{theorem}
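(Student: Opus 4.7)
The plan is to extract strong structural constraints on $G$ from the cospectrality with a tree, and then compare products of nonzero $Q$-eigenvalues via the Matrix Tree Theorem to extract the divisibility $4\mid n$.

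First I would observe that, since $T$ is a connected bipartite graph with $n-1$ edges, Propositions~\ref{prop:Qedges} and~\ref{prop:Qcomp} force $G$ to have $n-1$ edges and exactly one bipartite component. Since $G$ is not a tree, $G$ must be disconnected (a connected graph on $n$ vertices with $n-1$ edges is a tree), so $G$ has some $c\ge 2$ components $G_1,\dots,G_c$ with cyclomatic numbers $r_i=|E(G_i)|-|V(G_i)|+1\ge 0$. Summing, $\sum_i r_i=(n-1)-n+c=c-1$.

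Next I would pin down the component structure. Any non-bipartite connected component has $r_i\ge 1$, so if all $c$ components were non-bipartite the cyclomatic sum would be at least $c>c-1$, contradiction; together with the multiplicity-of-zero constraint this means exactly one component is bipartite. Call it $B$. If $B$ itself contained a cycle then $r_B\ge 1$, while the remaining $c-1$ non-bipartite components each contribute at least $1$, again giving $\sum r_i\ge c$, contradiction. Hence $B$ is a tree on some $n_B\ge 1$ vertices, and each of the other $c-1$ components, being non-bipartite with $r_i=1$, is a connected odd unicyclic graph.

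With the structure in hand, I would compare products of nonzero $Q$-eigenvalues on both sides, using that the $Q$-spectrum of a disjoint union is the union of the $Q$-spectra of the components. For $T$, Proposition~\ref{prop:same} identifies the $Q$- and $L$-spectra, and Theorem~\ref{thm:MTT} gives a product of $n\cdot 1=n$. For $G$, Proposition~\ref{prop:same} plus Theorem~\ref{thm:MTT} applied to $B$ contributes a factor of $n_B$, and Theorem~\ref{thm:QMTT} contributes a factor of $4$ from each of the $c-1$ odd unicyclic components, for a total of $n_B\cdot 4^{c-1}$ (note that the non-bipartite components contribute no further zero eigenvalues, by Proposition~\ref{prop:Qcomp}). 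Equating these products yields $n=n_B\cdot 4^{c-1}$, and since $c\ge 2$ this forces $4\mid n$, as desired.

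The main obstacle is the second step, ruling out the possibility that the single bipartite component itself carries the cycles; the cyclomatic-sum accounting handles this cleanly, but it depends crucially on the sharp identity $\sum r_i=c-1$ and on the observation that each non-bipartite connected component has $r_i\ge 1$. Handling the degenerate case $n_B=1$ (an isolated vertex among the components) requires only noting that the empty product of nonzero $L$-eigenvalues of $K_1$ equals $1=n_B$, consistent with the Matrix Tree Theorem.
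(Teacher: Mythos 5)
Your proposal is correct and follows essentially the same route as the paper: both derive from Propositions~\ref{prop:Qedges} and~\ref{prop:Qcomp} that $G$ must decompose as one tree component plus odd unicyclic components, and both then equate the products of nonzero $Q$-eigenvalues via Proposition~\ref{prop:same}, Theorem~\ref{thm:MTT}, and Theorem~\ref{thm:QMTT} to get $n = n_B\cdot 4^{\ell}$ with $\ell\ge 1$. Your cyclomatic-number accounting is just a more explicit version of the paper's informal edge-counting argument for why the non-bipartite components are exactly unicyclic.
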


\begin{proof}
Suppose $T$ is a tree and $G$ a non-tree on $n$ vertices which are cospectral with respect to $Q$ (so $G$ is a spectral faux tree). Let $\lambda_1\le\cdots\le\lambda_n$ be their common spectrum. Since $T$ is a tree with one component, by Proposition~\ref{prop:Qcomp} we have that $0$ is an eigenvalue with multiplicity $1$, and so $\lambda_1=0$ and $\lambda_2>0$. So, $G$ must also have exactly one bipartite component and $n-1$ edges (by Proposition~\ref{prop:Qedges}). 

Since there are more vertices than edges in $G$, some component of $G$ must have more vertices than edges, and since the component is connected, that means at least one component of $G$ is a tree. Moreover, since $G$ cannot have two bipartite components, there is exactly one tree. For the remaining components, each component must be odd unicyclic; unicyclic because if one component were not unicyclic we would have too many or too few edges, and odd unicyclic because we cannot have another bipartite component. We conclude that $G=T'\cup U_1\cup \cdots \cup U_\ell$ where $T'$ is a tree on less than $n$ vertices and the $U_i$ are odd unicyclic graphs with $\ell\ge1$.

We now compute $\lambda_2\cdots\lambda_n$. By Proposition~\ref{prop:same}, we have that the eigenvalues of a tree are the same as for the Laplacian. Now, applying Theorem~\ref{thm:MTT}, this product is the number of vertices of $T$ (since there is only one spanning tree), which is $n$. Since $\lambda_1=0$ will be an eigenvalue of $T'$, we have that the product of the non-tree eigenvalues will give the determinants of the respective components. So we have
\[
n = |T|=\lambda_2\cdots\lambda_n=|T'|\det(U_1)\cdots\det(U_\ell)=|T'|4^\ell,
\]
where in the last step we used Theorem~\ref{thm:QMTT} (and the fact that the determinant of a graph is the product of its eigenvalues). In particular, we have that $4$ divides $n$.
\end{proof}

The preceding result shows that if spectral faux trees exist for $Q$, then they must have $n=4k$ vertices. It does not, however, show that spectral faux trees exist for all such $n$. We now establish their existence.

\begin{theorem}
There exist exponentially many spectral faux trees with respect to $Q$ on $n=4k$ vertices.
\end{theorem}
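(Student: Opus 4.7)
The plan is to set up a Schwenk-style cospectrality result for $Q$ and then apply it with a small ``swap gadget.''

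First, I would derive a coalescence formula for $\phi_Q$. Write $A\cdot B$ for the coalescence of rooted graphs $(A,v_1)$ and $(B,v_2)$ at their roots. Applying a Schur-complement expansion along the identified row/column in $xI - Q(A\cdot B)$ gives
\[
\phi_Q(A\cdot B) \;=\; \phi_Q(A)\,\psi(B) \;+\; \psi(A)\,\phi_Q(B) \;-\; x\,\psi(A)\,\psi(B),
\]
where $\psi(G) := \det(xI - Q(G)[v])$ is the characteristic polynomial of the principal submatrix of $Q(G)$ with the root row/column removed---crucially, \emph{not} $\phi_Q(G-v)$, since the submatrix retains the original diagonal degrees. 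The degree-sum contribution $d_{v_1}+d_{v_2}$ at the identified vertex cancels cleanly against the two $(x-d_{v_i})$ terms from the individual matrices, leaving a formula of the same shape as Schwenk's identity for $A$. The immediate corollary: if $(A_1,u_1)$ and $(A_2,u_2)$ satisfy both $\phi_Q(A_1)=\phi_Q(A_2)$ and $\psi(A_1)=\psi(A_2)$, then $A_1\cdot H$ and $A_2\cdot H$ are $Q$-cospectral for every rooted $(H,w)$.

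Next I would exhibit a swap gadget: a rooted pair $(T^0,r),(G^0,s)$ on $m=4j$ vertices with $T^0$ a tree, $G^0$ a non-tree, and both spectral equalities holding. A direct check rules out the $4$-vertex pair $(K_{1,3},K_3\cup K_1)$, since its four possible $\psi$-polynomials $(x-1)^3$, $(x-1)(x^2-4x+1)$, $x(x-1)(x-3)$, and $(x-4)(x-1)^2$ are all distinct. The search then moves to $m=8$ or $m=12$, where Table~\ref{tab:Q} already guarantees cospectral (tree, non-tree) pairs exist; a brute-force check of the small number of such candidates, over all root choices, produces an explicit gadget.

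Given the gadget, the exponential lower bound is nearly immediate. For each rooted tree $(H,w)$ on $4k-m+1$ vertices, $H\cdot T^0$ is a tree on $4k$ vertices (coalescing two trees at a vertex yields a tree) while $H\cdot G^0$ is a non-tree on $4k$ vertices (the cycle of $G^0$ survives), and by the corollary they are $Q$-cospectral, so $H\cdot G^0$ is a faux tree. The number of rooted trees on $\ell$ vertices grows like $\Theta(\alpha^{\ell}\ell^{-3/2})$ for some $\alpha>1$, and since the gadget is recognizable as a limb of $H\cdot G^0$ (so $H$ is recoverable up to a bounded automorphism ambiguity), modulo isomorphism we still obtain $2^{\Omega(k)}$ distinct faux trees on $4k$ vertices.

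The main obstacle is producing the gadget. The equality $\psi(T^0)=\psi(G^0)$ is a genuine strengthening of $Q$-cospectrality and fails for the smallest cospectral pair, so a specific pair must be verified in a finite (and slightly tedious) computation on $8$- or $12$-vertex graphs; once any such pair is in hand, the Schwenk-style machinery supplies the rest of the argument with no further work.
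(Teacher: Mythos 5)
Your coalescence identity for $Q$ is correct (a Schur-complement/bordered-determinant expansion at the identified vertex does give $\phi_Q(A\cdot B)=\phi_Q(A)\psi(B)+\psi(A)\phi_Q(B)-x\,\psi(A)\psi(B)$ with $\psi$ retaining the original diagonal degrees), and the corollary you draw from it is sound. The fatal problem is that the gadget you defer to a ``finite computation'' provably does not exist, on $8$ or $12$ vertices or on any number of vertices. For $H\cdot T^0$ to be a tree, $T^0$ must itself be a tree, so $G^0$ must be a $Q$-faux tree; by the structure theorem earlier in this section, $G^0=T'\cup U_1\cup\cdots\cup U_\ell$ with $T'$ a tree and each $U_i$ odd unicyclic, $\ell\ge1$. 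Now evaluate $\psi$ at $x=0$: up to the sign $(-1)^{m-1}$ this is $\det\bigl(Q(G)[v]\bigr)$. For a tree $T^0$ rooted at any $r$, bipartiteness gives $Q(T^0)[r]\sim L(T^0)[r]$, so $\det\bigl(Q(T^0)[r]\bigr)=1$ by the matrix tree theorem. For $G^0$ rooted at any $s$: if $s$ lies in the tree component $T'$, the block-diagonal structure gives $\det\bigl(Q(G^0)[s]\bigr)=1\cdot\prod_i\det Q(U_i)=4^\ell\ge4$ (Theorem on odd unicyclic graphs); if $s$ lies in some $U_j$, the factor $\det Q(T')=0$ (bipartite, connected) kills the product. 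So $\psi(T^0,r)(0)=\pm1$ while $\psi(G^0,s)(0)\in\{0,\pm4^\ell\}$, and the required equality $\psi(T^0)=\psi(G^0)$ can never hold. A single-vertex Schwenk-style limb swap is therefore impossible for $Q$, and the rest of your argument has nothing to stand on.

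The paper's construction sidesteps this by not swapping a limb at one vertex: it starts from the $4$-vertex cospectral pair $K_{1,3}$ and $K_3\cup K_1$ and attaches a copy of an arbitrary rooted graph to \emph{every} vertex of each, invoking a coalescing result of the authors that preserves $Q$-cospectrality under this simultaneous attachment. Taking the attached rooted graph to be an arbitrary rooted tree on $k$ vertices yields a tree and a faux tree on $4k$ vertices, and the exponential count comes from the number of rooted trees, exactly as in the last step of your argument. If you want to salvage your approach, the lesson of the obstruction above is that you must glue along more than one vertex.
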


\begin{proof}
First note that the tree and faux tree in Figure~\ref{fig:Qpair1} are cospectral with respect to $Q$, so the result holds for $k = 1$.
\begin{figure}[!htb]
\begin{center}
\begin{tikzpicture}[scale=0.8]

    \pgfmathsetmacro{\r}{1};

    \node[inner sep=2pt,thick,shape=circle,draw=black] (A) at (90:\r) {};
    \node[inner sep=2pt,thick,shape=circle,draw=black] (B) at (210:\r) {};
    \node[inner sep=2pt,thick,shape=circle,draw=black] (C) at (330:\r) {};
    \node[inner sep=2pt,thick,shape=circle,draw=black] (D) at (0,0) {};

    \path [thick,-] (A) edge node[left] {} (B);
    \path [thick,-](B) edge node[left] {} (C);
    \path [thick,-](A) edge node[left] {} (C);
    
\end{tikzpicture}
\hspace{3em}
\begin{tikzpicture}[scale=0.8]

    \pgfmathsetmacro{\r}{1};

    \node[inner sep=2pt,thick,shape=circle,draw=black] (A) at (0,0) {};
    \node[inner sep=2pt,thick,shape=circle,draw=black] (B) at (90:\r) {};
    \node[inner sep=2pt,thick,shape=circle,draw=black] (C) at (210:\r) {};
    \node[inner sep=2pt,thick,shape=circle,draw=black] (D) at (330:\r) {};

    \path [thick,-] (A) edge (B);
    \path [thick,-](A) edge (C);
    \path [thick,-](A) edge (D);
    
\end{tikzpicture}
\end{center}
\caption{A cospectral tree and faux tree pair with respect to $Q$.}
\label{fig:Qpair1}
\end{figure}
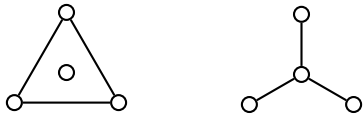

We can now build on the graphs in Figure~\ref{fig:Qpair1} by taking $4$ copies of any rooted graph $G$ and identifying each root with a distinct vertex in the cospectral graphs (see Figure~\ref{fig:Qpair2} for an example where we attach a star). By a result of the authors \cite[Corollary~2]{coalescing}, the resulting graphs remain cospectral with respect to $Q$. In particular, if $G$ is a tree on $k$ vertices, then the resulting graphs are a cospectral tree and faux tree pair on $4k$ vertices.

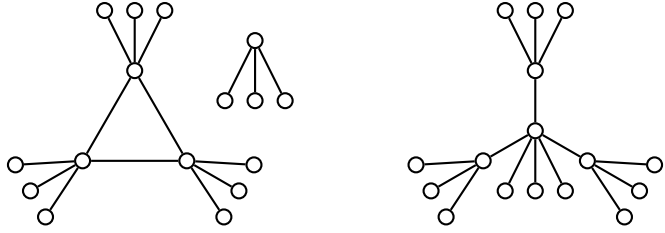
\begin{figure}
\begin{center}
\begin{tikzpicture}[scale=0.8]

    \pgfmathsetmacro{\r}{1};
    \pgfmathsetmacro{\rr}{2};
    \pgfmathsetmacro{\s}{0.5};

    \node[inner sep=2pt,thick,shape=circle,draw=black] (A) at (90:\r) {};
    \node[inner sep=2pt,thick,shape=circle,draw=black] (B) at (210:\r) {};
    \node[inner sep=2pt,thick,shape=circle,draw=black] (C) at (330:\r) {};
    \node[inner sep=2pt,thick,shape=circle,draw=black] (D) at (2,1.5) {};
    
    \node[inner sep=2pt,thick,shape=circle,draw=black] (A1) at (0,\rr) {};
    \node[inner sep=2pt,thick,shape=circle,draw=black] (A2) at (\s,\rr) {};
    \node[inner sep=2pt,thick,shape=circle,draw=black] (A3) at (-\s,\rr) {};
    
    \begin{scope}[rotate=120]
        \node[inner sep=2pt,thick,shape=circle,draw=black] (B1) at (0,\rr) {};
        \node[inner sep=2pt,thick,shape=circle,draw=black] (B2) at (\s,\rr) {};
        \node[inner sep=2pt,thick,shape=circle,draw=black] (B3) at (-\s,\rr) {};
    \end{scope}
    
    \begin{scope}[rotate=240]
        \node[inner sep=2pt,thick,shape=circle,draw=black] (C1) at (0,\rr) {};
        \node[inner sep=2pt,thick,shape=circle,draw=black] (C2) at (\s,\rr) {};
        \node[inner sep=2pt,thick,shape=circle,draw=black] (C3) at (-\s,\rr) {};
    \end{scope}
    
    \node[inner sep=2pt,thick,shape=circle,draw=black] (D1) at (2,0.5) {};
    \node[inner sep=2pt,thick,shape=circle,draw=black] (D2) at (2+\s,0.5) {};
    \node[inner sep=2pt,thick,shape=circle,draw=black] (D3) at (2-\s,0.5) {};

    \path [thick,-] (A) edge (B);
    \path [thick,-](B) edge (C);
    \path [thick,-](A) edge (C);
    
    \path [thick,-] (A) edge (A1);
    \path [thick,-] (A) edge (A2);
    \path [thick,-] (A) edge (A3);
    
    \path [thick,-] (B) edge (B1);
    \path [thick,-] (B) edge (B2);
    \path [thick,-] (B) edge (B3);
    
    \path [thick,-] (C) edge (C1);
    \path [thick,-] (C) edge (C2);
    \path [thick,thick,-] (C) edge (C3);
    
    \path [thick,-] (D) edge (D1);
    \path [thick,-] (D) edge (D2);
    \path [thick,-] (D) edge (D3);
    
\end{tikzpicture}
\hspace{3em}
\begin{tikzpicture}[scale=0.8]

    \pgfmathsetmacro{\r}{1};
    \pgfmathsetmacro{\rr}{2};
    \pgfmathsetmacro{\s}{0.5};

    \node[inner sep=2pt,thick,shape=circle,draw=black] (D) at (0,0) {};
    \node[inner sep=2pt,thick,shape=circle,draw=black] (A) at (90:\r) {};
    \node[inner sep=2pt,thick,shape=circle,draw=black] (B) at (210:\r) {};
    \node[inner sep=2pt,thick,shape=circle,draw=black] (C) at (330:\r) {};
    
    \node[inner sep=2pt,thick,shape=circle,draw=black] (A1) at (0,\rr) {};
    \node[inner sep=2pt,thick,shape=circle,draw=black] (A2) at (\s,\rr) {};
    \node[inner sep=2pt,thick,shape=circle,draw=black] (A3) at (-\s,\rr) {};
    
    \begin{scope}[rotate=120]
        \node[inner sep=2pt,thick,shape=circle,draw=black] (B1) at (0,\rr) {};
        \node[inner sep=2pt,thick,shape=circle,draw=black] (B2) at (\s,\rr) {};
        \node[inner sep=2pt,thick,shape=circle,draw=black] (B3) at (-\s,\rr) {};
    \end{scope}
    
    \begin{scope}[rotate=240]
        \node[inner sep=2pt,thick,shape=circle,draw=black] (C1) at (0,\rr) {};
        \node[inner sep=2pt,thick,shape=circle,draw=black] (C2) at (\s,\rr) {};
        \node[inner sep=2pt,thick,shape=circle,draw=black] (C3) at (-\s,\rr) {};
    \end{scope}
    
    \node[inner sep=2pt,thick,shape=circle,draw=black] (D1) at (0,-1) {};
    \node[inner sep=2pt,thick,shape=circle,draw=black] (D2) at (\s,-1) {};
    \node[inner sep=2pt,thick,shape=circle,draw=black] (D3) at (-\s,-1) {};

    \path [thick,-] (D) edge (A);
    \path [thick,-](D) edge (B);
    \path [thick,-](D) edge (C);
    
    \path [thick,-] (A) edge (A1);
    \path [thick,-] (A) edge (A2);
    \path [thick,-] (A) edge (A3);
    
    \path [thick,-] (B) edge (B1);
    \path [thick,-] (B) edge (B2);
    \path [thick,-] (B) edge (B3);
    
    \path [thick,-] (C) edge (C1);
    \path [thick,-] (C) edge (C2);
    \path [thick,-] (C) edge (C3);
    
    \path [thick,-] (D) edge (D1);
    \path [thick,-] (D) edge (D2);
    \path [thick,-] (D) edge (D3);
    
\end{tikzpicture}
\end{center}
\caption{The $Q$-cospectral tree and faux tree pair obtained by attaching a rooted star to each vertex of the graphs in Figure~\ref{fig:Qpair1}}
\label{fig:Qpair2}
\end{figure}
 
Since we can use an arbitrary rooted tree in this construction, and the number of trees on $k$ vertices grows exponentially, the number of spectral faux trees also grows exponentially as a function $k$.
\end{proof}

While we have shown that the number of trees grows exponentially, the arguments used produce trees with high amounts of symmetry, which is uncommon. Based on this and the limited data from Table~\ref{tab:Q}, the authors believe that spectral faux trees are rare for the signless Laplacian.

\section{Normalized adjacency and ornamented binary trees} \label{sec:branching}
We now consider the normalized adjacency matrix, $\mathcal{A}=D^{-1/2}AD^{-1/2}$ (we will only work with graphs without isolated vertices to avoid dealing with division by $0$). As mentioned in Section~\ref{sec:introduction}, $\mathcal{A}$ is closely related to the probability transition matrix ($P=D^{-1}A$) and the normalized Laplacian matrix ($\mathcal{L}=I-D^{-1/2}AD^{-1/2}$), so results of cospectrality for any one of these will also hold for the others. 

The main goal of this section will be the construction of large families of spectral faux trees. So, we will introduce a family of trees which are cospectral with non-trees. The proof of cospectrality will be done by establishing that the characteristic polynomials are equal (or more precisely, by showing processes to compute the characteristic polynomials that will result in equivalent expressions). Much of this work has similarities to earlier constructions of Butler and Heysse \cite{heysse}, but whereas that construction was circular, ours will involve branching.

\subsection{Characteristic polynomial of the normalized adjacency}
We first establish basic facts and properties of the characteristic polynomial for the normalized adjacency. We begin with the following definitions.

\begin{definition}
A \emph{cycle decomposition} $C$ is a subgraph without isolated vertices consisting of disjoint edges and cycles. We further have that $u(C)$ is the number of vertices not used in $C$ (vertices not in any cycle or edge); $\lng(C)$ is the number of cycles of length $\ge3$; and $\cy(C)$ is the number of cycles of $C$, where an edge is considered a cycle of length $2$.
\end{definition}

Note that in some contexts, it is useful to consider isolated vertices as part of the cycle decomposition, so it is a decomposition of the vertices. For our purposes, it is convenient to leave out these vertices. With this notation in place, we now have a combinatorial way to compute the characteristic polynomial of the normalized adjacency matrix.

\begin{proposition}\label{prop:allC}
For a graph $G$, let $\mathcal{C}$ be the set of all cycle decompositions. Then, the characteristic polynomial $p(x)$ of $G$ with respect to $\mathcal{A}$ can be written as,
\begin{equation}\label{eq:allC}
p(x)=\sum_{C\in\mathcal{C}}x^{u(C)}2^{\lng(C)}(-1)^{\cy(C)}\prod_{v\in C}\frac1{\deg(v)}.
\end{equation}
\end{proposition}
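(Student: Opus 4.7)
The plan is to expand $\det(xI - \mathcal{A})$ via the Leibniz formula and group terms according to the cycle decomposition induced by each permutation. Setting $M = xI - \mathcal{A}$, I observe that $M_{ii} = x$ (since $G$ is simple, so $\mathcal{A}_{ii} = 0$) and $M_{ij} = -\mathcal{A}_{ij}$ for $i \neq j$. For a permutation $\sigma \in S_n$ to contribute a nonzero term to $\sum_\sigma \operatorname{sgn}(\sigma)\prod_i M_{i,\sigma(i)}$, one needs $\{i,\sigma(i)\} \in E(G)$ for every non-fixed point $i$. The non-trivial cycles of $\sigma$ therefore decompose into length-$2$ cycles (which correspond to single edges of $G$) together with length-$\ge 3$ cycles (which correspond to proper cycles of $G$), jointly forming a cycle decomposition $C \in \mathcal{C}$; the fixed points of $\sigma$ are precisely the unused vertices, numbering $u(C)$.

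Next I would count and sign the permutations realizing a given $C$. Each length-$2$ cycle of $C$ arises from a unique transposition, while each cycle of length $\ge 3$ admits two orientations, so exactly $2^{\lng(C)}$ permutations yield $C$, accounting for the factor $2^{\lng(C)}$. For the sign, a length-$\ell$ cycle contributes $(-1)^{\ell-1}$ to $\operatorname{sgn}(\sigma)$ and an additional $(-1)^\ell$ from extracting the minus signs from the off-diagonal factors of $M$, for a net factor of $-1$ per non-trivial cycle regardless of its length; multiplying across the cycles of $C$ gives the overall sign $(-1)^{\cy(C)}$. The $u(C)$ fixed points contribute the factor $x^{u(C)}$.

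Finally, the product of $\mathcal{A}$-entries along a cycle $(i_1\,i_2\,\cdots\,i_\ell)$ of $\sigma$ equals $\prod_j 1/\sqrt{\deg(i_j)\deg(i_{j+1})}$, which telescopes to $\prod_j 1/\deg(i_j)$ since each vertex's degree appears exactly twice under the square root. Aggregating across all cycles of $C$ produces $\prod_{v\in C} 1/\deg(v)$, and summing over all $C \in \mathcal{C}$ completes the identity. The main obstacle is the sign bookkeeping, and the key observation that unlocks it is that combining $\operatorname{sgn}(\sigma)$ with the off-diagonal minus signs gives a net $-1$ for every non-trivial cycle of $\sigma$ independent of its length, so the overall sign simplifies cleanly to $(-1)^{\cy(C)}$; the rest of the argument is routine organization of orientations, fixed-point factors, and the square-root telescoping.
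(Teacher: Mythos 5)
Your proof is correct and is essentially the argument the paper defers to: the paper gives no proof of its own but cites Butler and Heysse, whose proof is the same Leibniz expansion of $\det(xI-\mathcal{A})$ with terms grouped by the cycle structure of the permutation (a normalized-degree version of the Harary--Sachs coefficient formula). Your sign bookkeeping (net $-1$ per nontrivial cycle), the orientation count giving $2^{\lng(C)}$, and the telescoping of the $1/\sqrt{\deg}$ factors are all handled correctly.
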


The proof of this is nearly identical to that for the normalized Laplacian in Butler and Heysse \cite[Proposition~1]{heysse}, and we refer the reader there for the details.

Looking ahead, our construction will involve complete bipartite graphs as blocks that have been glued together in some way. We want to use this information to simplify the computation of the characteristic polynomial as much as possible. This is the purpose of the following results.

\begin{proposition}\label{prop:Ktt for A}
For $t\ge2$, let $\overline{\mathcal{C}}$ denote the collection of cycle decompositions using all vertices of $K_{t,t}$. Then
\[
\sum_{C\in\overline{\mathcal{C}}}2^{\lng(C)}(-1)^{\cy(C)} = 0.
\]
\end{proposition}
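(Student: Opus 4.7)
The plan is to recognize this sum as (essentially) the constant term of the characteristic polynomial of the normalized adjacency matrix of $K_{t,t}$, and then observe that this constant term vanishes because $\mathcal{A}$ is rank-deficient for $t \geq 2$.

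First I would apply Proposition~\ref{prop:allC} to $K_{t,t}$ and evaluate the resulting polynomial $p(x)$ at $x = 0$. Only cycle decompositions with $u(C) = 0$ survive, i.e.\ precisely the decompositions in $\overline{\mathcal{C}}$. Since every vertex of $K_{t,t}$ has degree $t$, the factor $\prod_{v \in C}\tfrac{1}{\deg(v)}$ simplifies to the constant $t^{-2t}$, so one obtains
\[
p(0) \;=\; \frac{1}{t^{2t}}\sum_{C \in \overline{\mathcal{C}}} 2^{\lng(C)}(-1)^{\cy(C)}.
\]

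Next, I would identify $p(0)$ with $\det(\mathcal{A})$. Since the characteristic polynomial of a $2t \times 2t$ matrix satisfies $p(0) = (-1)^{2t}\det(\mathcal{A}) = \det(\mathcal{A})$, it remains to show that $\mathcal{A}$ is singular. For $K_{t,t}$ we have $\mathcal{A} = \tfrac{1}{t}\bigl(\begin{smallmatrix}0 & J \\ J & 0\end{smallmatrix}\bigr)$, where $J$ is the $t \times t$ all-ones matrix. Because $J$ has rank $1$, this block matrix has rank $2$, which is strictly less than $2t$ whenever $t \geq 2$. Hence $\det(\mathcal{A}) = 0$, and multiplying through by $t^{2t}$ gives the claimed identity.

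There is no real obstacle here: once one notices that $\overline{\mathcal{C}}$ corresponds to the constant term of $p(x)$, the argument is a short matrix computation. The only mild subtlety is being careful that the factor $\prod 1/\deg(v)$ factors out uniformly (this only works because $K_{t,t}$ is regular) and that $t \geq 2$ is exactly the hypothesis that guarantees rank deficiency.
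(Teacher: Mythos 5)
Your proof is correct and takes essentially the same route as the paper: the paper identifies the sum directly with $\det(A)$ for $K_{t,t}$ (equivalently, the constant term of the adjacency characteristic polynomial) and concludes from rank deficiency, while you route through the normalized adjacency and factor out the uniform $t^{-2t}$ coming from regularity. That detour is harmless and correctly handled, so the two arguments are the same in substance.
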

\begin{proof}
This sum is equivalent to computing the determinant of the adjacency matrix of $K_{t,t}$ (see Brualdi and Ryser \cite{brualdi}; alternatively this sum is also the constant term of the characteristic polynomial of the adjacency). Since the adjacency matrix of $K_{t,t}$ is rank deficient, the determinant is $0$.
\end{proof}

\begin{definition}
Given a graph $G$, a subgraph $H$, and a cycle decomposition $C$ of $G$, let $\mathcal{I}_H(C)$ be the set of all edges or cycles of $C$ that are wholly contained inside of $H$. We call $\mathcal{I}_H(C)$ the \emph{internal cycles} of $H$ with respect to $C$.
\end{definition}

\begin{lemma}\label{lem:usefulC}
For a graph $G$, let $H_1,H_2,\ldots,H_\ell$ be \emph{edge disjoint} complete bipartite graphs, and let $\widehat{\mathcal{C}}$ be the set of all cycle decompositions so that for all $C\in\widehat{\mathcal{C}}$ and $H_i$, we have $\mathcal{I}_{H_i}(C)$ is either empty or consists of a single edge. Then, the characteristic polynomial $p(x)$ of $G$ with respect to $\mathcal{A}$ can be written as,
\[
p(x)=\sum_{C\in\widehat{\mathcal{C}}}x^{u(C)}2^{\lng(C)}(-1)^{\cy(C)}\prod_{v\in C}\frac1{\deg(v)}.
\]
\end{lemma}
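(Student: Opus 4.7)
The plan is to start from Proposition~\ref{prop:allC}, which expresses $p(x)$ as a sum over the full set $\mathcal{C}$ of cycle decompositions, and to show that the contributions of the ``bad'' cycle decompositions (those in $\mathcal{C}\setminus\widehat{\mathcal{C}}$) cancel in pairs. Concretely, I would define $C$ to be bad if there exists some index $i$ for which $\mathcal{I}_{H_i}(C)$ is neither empty nor a single edge, and for each bad $C$ I would set $i(C)$ to be the smallest such index. The aim is to partition bad decompositions into equivalence classes whose contributions each sum to zero via Proposition~\ref{prop:Ktt for A}.

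The equivalence relation I have in mind is: $C\sim C'$ iff $i(C)=i(C')=i$, $V_i(C)=V_i(C')$ (the set of vertices internally used by $H_i$), and $C\setminus \mathcal{I}_{H_i}(C)=C'\setminus\mathcal{I}_{H_i}(C')$ (everything outside $H_i$ is identical). Two structural observations make this work: first, since the $H_j$'s are edge-disjoint, any edge or cycle of $C$ is wholly contained in at most one $H_j$, so the ``internal'' and ``external'' parts really are disjoint and we can vary one without disturbing the other. Second, since $H_i$ is complete bipartite, the induced subgraph on $V_i(C)$ is itself a complete bipartite graph $K_{t,t}$ (with $t=t_i$); it must have equal parts because a cycle decomposition of a bipartite graph covering a vertex set uses equally many vertices from each side. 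Within an equivalence class, $\mathcal{I}_{H_i}(C)$ then varies exactly over all cycle decompositions of $K_{t,t}$ covering all its vertices, and my bad hypothesis ($\mathcal{I}_{H_i}$ not a single edge, which is automatic when $t\ge 2$) ensures we really do get all of the set $\overline{\mathcal{C}}$ in Proposition~\ref{prop:Ktt for A}.

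Next I would verify that the weight $x^{u(C)}2^{\lng(C)}(-1)^{\cy(C)}\prod_{v\in C}\tfrac{1}{\deg(v)}$ factors cleanly across an equivalence class. The used vertex set of $C$ is the disjoint union of the external used set (fixed) and $V_i(C)$ (fixed), so $u(C)$ and $\prod_{v\in C}1/\deg(v)$ are constant on the class; the sign-and-multiplicity factor splits as a fixed external piece times $2^{\lng(\mathcal{I}_{H_i}(C))}(-1)^{\cy(\mathcal{I}_{H_i}(C))}$. Summing over the class therefore yields a nonzero constant times $\sum_{C'\in\overline{\mathcal{C}}}2^{\lng(C')}(-1)^{\cy(C')}$, which vanishes by Proposition~\ref{prop:Ktt for A} since $t_i\ge 2$. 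Also, I should check that the condition ``$\mathcal{I}_{H_j}(C)$ is empty or a single edge for all $j<i$'' is preserved across the equivalence class: because $H_j$ and $H_i$ are edge-disjoint, this condition depends only on the external part of $C$, which is held fixed, so the minimality of $i(C)$ is stable within the class.

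The main obstacle is bookkeeping rather than a deep difficulty: one has to be careful that the partition of bad decompositions into classes is well-defined (which uses the minimality of $i(C)$) and that varying the internal piece really does traverse all of $\overline{\mathcal{C}}$ without producing collisions between classes or slipping into $\widehat{\mathcal{C}}$. The edge-disjointness of the $H_i$'s is what makes every moving part separable from the others, and it is the one structural hypothesis I would watch most carefully when writing up the details.
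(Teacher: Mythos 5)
Your proposal is correct and follows essentially the same route as the paper: both group cycle decompositions into classes that fix everything outside a single block $H_i$ together with the vertex set used internally in $H_i$, and then annihilate each class with $t\ge 2$ via Proposition~\ref{prop:Ktt for A}. The only difference is bookkeeping --- you treat all blocks in one pass by indexing classes with the minimal ``bad'' block, whereas the paper eliminates the blocks sequentially --- and your check that edge-disjointness keeps the condition at the earlier blocks stable within a class is exactly the point the paper also flags.
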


Comparing this result to Proposition~\ref{prop:allC}, the key difference is that we can simplify our cycle decompositions to the point where, in the identified complete bipartite graphs, we have either no cycles or a single edge. Before starting the proof, we also note that the same proof technique will establish a similar result for the adjacency matrix (though we do not need that result here).

\begin{proof}[Proof of Lemma~\ref{lem:usefulC}]
First, consider a single complete bipartite graph $H_1$. We can partition the collection of all cycle decompositions of $G$ into equivalence classes so that $C_1$ and $C_2$ are two cycle decompositions in the same equivalence class $\mathcal{C}'$ if and only if $C_1\setminus\mathcal{I}_{H_1}(C_1)=C_2\setminus\mathcal{I}_{H_1}(C_2)$ (they agree on the cycles that are not internal to $H$) and $V(\mathcal{I}_{H_1}(C_1))=V(\mathcal{I}_{H_1}(C_2))$ (they use the same vertices of $H$ for the internal cycles). Because we are dealing with complete bipartite graphs, we have that $\mathcal{I}_{H_1}(C)$, which consists of some combination of disjoint edges and even cycles, uses all the vertices of some $K_{t,t}$. In particular, as we look over the cycle decompositions in some equivalence class $\mathcal{C}'$, we see that all non-internal cycles of the $K_{t,t}$ are fixed. Furthermore, the set $\overline{\mathcal{C}}$ of internal cycles that vary across all cycle decompositions in $\mathcal{C}'$ contains all cycle decompositions using the vertices of the $K_{t,t}$. Then, if $t\ge2$, we have that

\[
\sum_{C\in \mathcal{C}'}x^{u(C)}2^{\lng(C)}(-1)^{\cy(C)}\prod_{v\in C}\frac1{\deg(v)}=
f(x)\sum_{\overline{C}\in\overline{\mathcal{C}}}2^{\lng(\overline{C})}(-1)^{\cy(\overline{C})}=0,
\]
where $f(x)$ is the portion of the expression that is constant across all of the cycle decompositions in $\mathcal{C}'$ (which corresponds to the contributions of the fixed non-internal cycles as well as the isolated vertices and degrees which are constant across all $C \in \mathcal{C}'$). By an application of Proposition~\ref{prop:Ktt for A}, the sum over $\overline{\mathcal{C}}$ is $0$.

So, the only equivalence classes that make a contribution to $p(x)$ as shown in Proposition~\ref{prop:allC} are ones where $t \leq 1$. That is, cycle decompositions for which $\mathcal{I}(H_1)$ is either empty or a single edge.

Now that we have this for $H_1$, we can sequentially apply the same  argument for the remaining $H_i$. The only thing to be checked is if we hold the portion of a cycle decomposition $C$ which is not part of $\mathcal{I}_{H_i}(C)$ fixed, that the collection of all other cycle decompositions in the same group will collectively give all ways to decompose the complete bipartite graph corresponding with $\mathcal{I}_{H_i}(C)$. This follows since the $H_i$ are edge disjoint, and so any perturbation internally would not have been removed in an earlier round.
\end{proof}

\subsection{Ornamented binary trees and words}
We now construct families of graphs, where each familiy consists of one tree and several non-trees. The basic underlying structure will be full binary trees.

\begin{definition}
A \emph{full} binary tree is a rooted tree where every vertex has either zero or two children.
\end{definition}

We now modify our binary tree by ``turning every $K_{1,2}$ into a $K_{p,q}$'', as shown in Figure~\ref{fig:oneblock}. More precisely we have the following.

\begin{definition}
Given a full binary tree $T$ and $p\ge1$, $q\ge2$, the \emph{$(p,q)$ ornamented $T$} is the graph resulting from taking \emph{every} vertex (``$\triangle$'') with two children (``${+}$'' and ``${-}$'') and then forming a $K_{p,q}$. The part with size $p$ is formed by adding $p-1$ new vertices together with ``$\triangle$'' and the part with size $q$ is formed by adding $q-2$ new vertices together with ``${+}$'' and ``${-}$''.
\end{definition}

\begin{figure}[!htb]
\centering
\begin{tabular}{ccc}
\begin{tabular}{c}
\begin{tikzpicture}[yscale=0.6]
\tikzstyle{vertex}=[inner sep=2pt,thick,shape=circle,draw=black,fill=white];

\node[vertex] (l1) at (0.25,0) {};
\node at (0.25,0.5) {$\triangle$};

(1.25,0.75) rectangle (1.75,2.25);
\node[vertex] (r1) at (0,-1.5) {};
\node[vertex] (r2) at (0.5,-1.5) {};
\node at (0,-2) {${-}$};
\node at (0.5,-2) {${+}$};

\draw[thick] (r1)--(l1)--(r2);

\end{tikzpicture}
\end{tabular}
&$\longrightarrow$&
\begin{tabular}{c}
\begin{tikzpicture}[yscale=0.6]
\tikzstyle{vertex}=[inner sep=2pt,thick,shape=circle,draw=black,fill=white];

\draw[ultra thick, fill=black!10!white, draw = black!30!white,rounded corners] (0.5,0.25) rectangle (2,-0.25);
\node[vertex] (l1) at (0.25,0) {};
\node[vertex,fill=black!20!white] (l2) at (0.75,0) {};
\node[vertex,fill=black!20!white] (l3) at (1.75,0) {};
\node at (1.25,0) {$\cdots$};
\node at (0.25,0.5) {$\triangle$};

\draw[ultra thick, fill=black!10!white, draw = black!30!white,rounded corners] 
    (0.75,-1.25) rectangle (2.25,-1.75);
\node[vertex] (r1) at (0,-1.5) {};
\node[vertex] (r2) at (0.5,-1.5) {};
\node[vertex,fill=black!20!white] (r3) at (1,-1.5) {};
\node[vertex,fill=black!20!white] (r4) at (2,-1.5) {};
\node at (1.5,-1.5) {$\cdots$};
\node at (0,-2) {${-}$};
\node at (0.5,-2) {${+}$};

\draw[thick] (r1)--(l1)--(r2) (l2)--(r3)--(l3)--(r4)--(l2)  (r3)--(l1)--(r4) (l2)--(r1)--(l3) (l2)--(r2)--(l3);

\node at (1.25,0.75) {$p{-}1$};
\node at (1.5,-2.25) {$q{-}2$};
\end{tikzpicture}
\end{tabular}
\end{tabular}

\caption{Rules for creating ``blocks'' in an ornamented binary tree. In the full tree, there will sometimes be additional edges on ``$\triangle$'', ``${-}$'', or ``${+}$'' to an adjacent block.}
\label{fig:oneblock}
\end{figure}
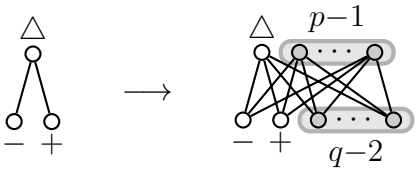

An example of a binary tree and three corresponding ornamented binary trees is shown in Figure~\ref{fig:ornamented}.

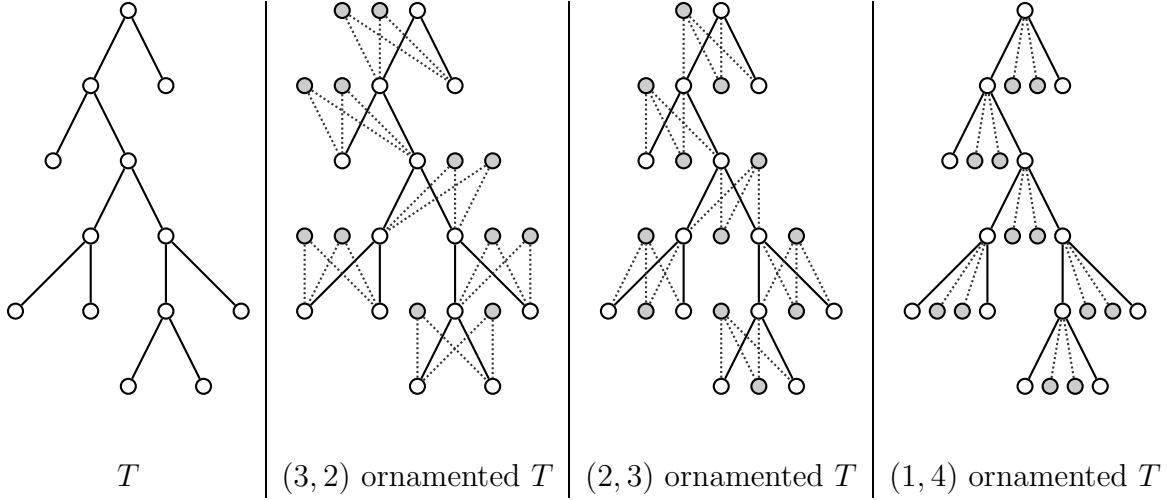
\begin{figure}
\centering
\begin{tabular}{c|c|c|c}
\begin{tikzpicture}[scale=0.5]
\tikzstyle{vertex}=[inner sep=2pt,thick,shape=circle,draw=black];
\tikzstyle{ornament}=[inner sep=2pt,thick,shape=circle,draw=black,fill=black!20!white];
\node[vertex] (A1) at ( 0, 0) {};
\node[vertex] (B1) at (-1,-2) {};
\node[vertex] (B2) at ( 1,-2) {};
\node[vertex] (C1) at (-2,-4) {};
\node[vertex] (C2) at ( 0,-4) {};
\node[vertex] (D1) at (-1,-6) {};
\node[vertex] (D2) at ( 1,-6) {};
\node[vertex] (E1) at (-3,-8) {};
\node[vertex] (E2) at (-1,-8) {};
\node[vertex] (E3) at ( 1,-8) {};
\node[vertex] (E4) at ( 3,-8) {};

\draw[thick] (A1)--(B1) (A1)--(B2) (B1)--(C1) (B1)--(C2) (C2)--(D1) (C2)--(D2) (D1)--(E1) (D1)--(E2) (D2)--(E3) (D2)--(E4);

\node[vertex] (F1) at ( 0,-10) {};
\node[vertex] (F2) at ( 2,-10) {};
\draw[thick] (E3)--(F1) (E3)--(F2);
\end{tikzpicture}

&
\begin{tikzpicture}[scale=0.5]
\tikzstyle{vertex}=[inner sep=2pt,thick,shape=circle,draw=black];
\tikzstyle{ornament}=[inner sep=2pt,thick,shape=circle,draw=black,fill=black!20!white];
\node[vertex] (A1) at ( 0, 0) {};
\node[vertex] (B1) at (-1,-2) {};
\node[vertex] (B2) at ( 1,-2) {};
\node[vertex] (C1) at (-2,-4) {};
\node[vertex] (C2) at ( 0,-4) {};
\node[vertex] (D1) at (-1,-6) {};
\node[vertex] (D2) at ( 1,-6) {};
\node[vertex] (E1) at (-3,-8) {};
\node[vertex] (E2) at (-1,-8) {};
\node[vertex] (E3) at ( 1,-8) {};
\node[vertex] (E4) at ( 3,-8) {};

\draw[thick] (A1)--(B1) (A1)--(B2) (B1)--(C1) (B1)--(C2) (C2)--(D1) (C2)--(D2) (D1)--(E1) (D1)--(E2) (D2)--(E3) (D2)--(E4);

\node[ornament] (a1) at (-1, 0) {};
\node[ornament] (a2) at (-2, 0) {};
\node[ornament] (b1) at (-2,-2) {};
\node[ornament] (b2) at (-3,-2) {};
\node[ornament] (c1) at ( 1,-4) {};
\node[ornament] (c2) at ( 2,-4) {};
\node[ornament] (d1) at (-3,-6) {};
\node[ornament] (d2) at (-2,-6) {};
\node[ornament] (d3) at ( 2,-6) {};
\node[ornament] (d4) at ( 3,-6) {};

\draw[ thick,color=black!80!white, densely dotted]
(a1)--(B1) (a1)--(B2) (a2)--(B1) (a2)--(B2)
(b1)--(C1) (b1)--(C2) (b2)--(C1) (b2)--(C2)
(c1)--(D1) (c1)--(D2) (c2)--(D1) (c2)--(D2)
(d1)--(E1) (d1)--(E2) (d2)--(E1) (d2)--(E2) (d3)--(E3) (d3)--(E4) (d4)--(E3) (d4)--(E4);

\node[vertex] (F1) at ( 0,-10) {};
\node[vertex] (F2) at ( 2,-10) {};
\draw[thick] (E3)--(F1) (E3)--(F2);
\node[ornament] (e1) at (0,-8) {};
\node[ornament] (e2) at (2,-8) {};
\draw[ thick,color=black!80!white, densely dotted]
(F1)--(e1)--(F2) (F1)--(e2)--(F2);

\end{tikzpicture}

&
\begin{tikzpicture}[scale=0.5]
\tikzstyle{vertex}=[inner sep=2pt,thick,shape=circle,draw=black];
\tikzstyle{ornament}=[inner sep=2pt,thick,shape=circle,draw=black,fill=black!20!white];
\node[vertex] (A1) at ( 0, 0) {};
\node[vertex] (B1) at (-1,-2) {};
\node[vertex] (B2) at ( 1,-2) {};
\node[vertex] (C1) at (-2,-4) {};
\node[vertex] (C2) at ( 0,-4) {};
\node[vertex] (D1) at (-1,-6) {};
\node[vertex] (D2) at ( 1,-6) {};
\node[vertex] (E1) at (-3,-8) {};
\node[vertex] (E2) at (-1,-8) {};
\node[vertex] (E3) at ( 1,-8) {};
\node[vertex] (E4) at ( 3,-8) {};

\draw[thick] (A1)--(B1) (A1)--(B2) (B1)--(C1) (B1)--(C2) (C2)--(D1) (C2)--(D2) (D1)--(E1) (D1)--(E2) (D2)--(E3) (D2)--(E4);

\node[ornament] (a1) at (-1, 0) {};
\node[ornament] (a2) at ( 0,-2) {};
\node[ornament] (b1) at (-2,-2) {};
\node[ornament] (b2) at (-1,-4) {};
\node[ornament] (c1) at ( 1,-4) {};
\node[ornament] (c2) at ( 0,-6) {};
\node[ornament] (d1) at (-2,-6) {};
\node[ornament] (d2) at (-2,-8) {};
\node[ornament] (d3) at ( 2,-6) {};
\node[ornament] (d4) at ( 2,-8) {};

\draw[ thick,color=black!80!white, densely dotted]
(a1)--(B1) (a1)--(B2) (a2)--(a1) (a2)--(A1)
(b1)--(C1) (b1)--(C2) (b2)--(b1) (b2)--(B1)
(c1)--(D1) (c1)--(D2) (c2)--(c1) (c2)--(C2)
(d1)--(E1) (d1)--(E2) (d2)--(d1) (d2)--(D1) (d3)--(E3) (d3)--(E4) (d4)--(d3) (d4)--(D2);

\node[vertex] (F1) at ( 0,-10) {};
\node[vertex] (F2) at ( 2,-10) {};
\draw[thick] (E3)--(F1) (E3)--(F2);
\node[ornament] (e1) at (0,-8) {};
\node[ornament] (e2) at (1,-10) {};
\draw[ thick,color=black!80!white, densely dotted]
(F1)--(e1)--(F2) (e1)--(e2)--(E3);

\end{tikzpicture}
&

\begin{tikzpicture}[scale=0.5]
\tikzstyle{vertex}=[inner sep=2pt,thick,shape=circle,draw=black];
\tikzstyle{ornament}=[inner sep=2pt,thick,shape=circle,draw=black,fill=black!20!white];
\node[vertex] (A1) at ( 0, 0) {};
\node[vertex] (B1) at (-1,-2) {};
\node[vertex] (B2) at ( 1,-2) {};
\node[vertex] (C1) at (-2,-4) {};
\node[vertex] (C2) at ( 0,-4) {};
\node[vertex] (D1) at (-1,-6) {};
\node[vertex] (D2) at ( 1,-6) {};
\node[vertex] (E1) at (-3,-8) {};
\node[vertex] (E2) at (-1,-8) {};
\node[vertex] (E3) at ( 1,-8) {};
\node[vertex] (E4) at ( 3,-8) {};

\draw[thick] (A1)--(B1) (A1)--(B2) (B1)--(C1) (B1)--(C2) (C2)--(D1) (C2)--(D2) (D1)--(E1) (D1)--(E2) (D2)--(E3) (D2)--(E4);

\node[ornament] (a1) at ({-1/3},-2) {};
\node[ornament] (a2) at ({ 1/3},-2) {};
\node[ornament] (b1) at ({-2/3},-4) {};
\node[ornament] (b2) at ({-4/3},-4) {};
\node[ornament] (c1) at ({-1/3},-6) {};
\node[ornament] (c2) at ({ 1/3},-6) {};
\node[ornament] (d1) at ({-7/3},-8) {};
\node[ornament] (d2) at ({-5/3},-8) {};
\node[ornament] (d3) at ({5/3},-8) {};
\node[ornament] (d4) at ({7/3},-8) {};

\draw[ thick,color=black!80!white, densely dotted]
(a1)--(A1)--(a2)
(b1)--(B1)--(b2)
(c1)--(C2)--(c2)
(d1)--(D1)--(d2)
(d3)--(D2)--(d4);

\node[vertex] (F1) at ( 0,-10) {};
\node[vertex] (F2) at ( 2,-10) {};
\draw[thick] (E3)--(F1) (E3)--(F2);
\node[ornament] (e1) at ({2/3},-10) {};
\node[ornament] (e2) at ({4/3},-10) {};
\draw[ thick,color=black!80!white, densely dotted]
(e2)--(E3)--(e1);

\end{tikzpicture}

\\[20pt]
$T$&$(3,2)$ ornamented $T$&$(2,3)$ ornamented $T$&$(1,4)$ ornamented $T$
\end{tabular}
\caption{A full binary tree $T$ and several associated ornamented trees (new vertices and edges are shaded to highlight the underlying tree).}
\label{fig:ornamented}
\end{figure}

\begin{observation}
A $(p,q)$ ornamented $T$ is a tree if and only if $p=1$.
\end{observation}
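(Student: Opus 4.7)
The plan is to argue both directions by analyzing the local structure of a single block and how blocks are glued together at shared vertices. The only subtlety is that a vertex of $T$ with a parent appears in two blocks: as the ``$\triangle$'' in its own block and as a ``${+}$'' or ``${-}$'' in its parent's block. Since new vertices added in distinct blocks are distinct and the blocks share only vertices of $T$, the ornamented graph is exactly the union of the $K_{p,q}$ blocks, one for each vertex of $T$ with two children.

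For the ``if'' direction, I would observe that when $p=1$, each block is $K_{1,q}$, which is a star centered at $\triangle$ with leaves ${+}$, ${-}$, and $q-2$ new vertices. The edges $\triangle{-}{+}$ and $\triangle{-}{-}$ are exactly the two edges of $T$ from the parent $\triangle$ to its two children, and the remaining $q-2$ edges attach fresh pendant vertices to $\triangle$. Thus the $(1,q)$ ornamented $T$ is obtained from $T$ by attaching $q-2$ new pendant vertices to each vertex of $T$ that has two children. Attaching pendants to a tree preserves both connectedness and acyclicity, so the result is a tree.

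For the ``only if'' direction, suppose $p\ge 2$. If $T$ has no vertex with two children, then $T$ is a single vertex and there is nothing to ornament (in which case the construction produces the empty/single-vertex graph, trivially a tree; one should note that the statement implicitly assumes $T$ has at least one internal $\triangle$, as otherwise no $K_{p,q}$ block is formed). Otherwise, pick any such $\triangle$ and consider its block $K_{p,q}$ with $p\ge 2$, $q\ge 2$. Choosing any two vertices on the $p$-side and any two on the $q$-side yields a $K_{2,2}$ subgraph, which is a $4$-cycle. Hence the ornamented graph contains a cycle and is not a tree.

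The main (very mild) obstacle is just bookkeeping: verifying that the blocks only overlap at vertices of $T$, never at edges, so that (i) in the $p=1$ case no unintended edges are introduced and the description as ``$T$ with pendants added'' is accurate, and (ii) in the $p\ge 2$ case the $C_4$ found inside a single $K_{p,q}$ genuinely persists as a subgraph of the whole ornamented graph. Both checks are immediate from the definition, since new vertices added in different blocks are declared distinct and the only shared vertices are $\triangle$, ${+}$, ${-}$, which lie on the $T$-edges between parent and child blocks.
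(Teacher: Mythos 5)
The paper states this Observation without proof, and your argument is the intended justification: for $p=1$ each block $K_{1,q}$ is a star at ``$\triangle$'' that preserves the two tree edges and merely adds $q-2$ pendants, while for $p\ge2$ any block $K_{p,q}$ with $p,q\ge2$ contains a $4$-cycle. Your proof is correct (including the careful note that blocks overlap only in vertices of $T$, and the degenerate single-vertex case), so there is nothing to compare against.
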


To help facilitate our conversation we want to find a way to describe our full binary tree $T$ by a word, which we will denote $W_T$. We will do this by the use of the following iterative rules starting from the root, also shown pictorially in Figure~\ref{fig:word}.

\begin{figure}[!htb]
\centering
\begin{tabular}{|c|c|c|} \hline
No subtrees & One subtree & Two subtrees \\ \hline &&\\[-7pt]
\begin{tabular}{c}
\begin{tikzpicture}[scale=0.5]
\tikzstyle{vertex}=[inner sep=2pt,thick,shape=circle,draw=black];

\draw[thick,densely dotted,color=white,rounded corners]
 (-1.25,-2) -- (-.75,-2) -- (-0.25,-4.5) -- (-1.25,-4.5);

\node[vertex]  (A) at ( 0, 0) {};
\node[vertex] (B1) at (-1.25,-2) {};
\node[vertex] (B2) at ( 1.25,-2) {};
\draw[thick] (B1)--(A)--(B2);
\end{tikzpicture}
\end{tabular}
&
\begin{tabular}{c}
\begin{tikzpicture}[scale=0.5]
\tikzstyle{vertex}=[inner sep=2pt,thick,shape=circle,draw=black,fill=white];

\draw[thick,densely dotted,fill=black!15!white,rounded corners]
 (-1.25,-2) -- (-.75,-2) -- (-0.25,-4.5) -- (-2.25,-4.5) -- (-1.75,-2) -- (-1.25,-2);
 \node at (-1.25,-3.75) {$T_1$};

\node[vertex]  (A) at ( 0, 0) {};
\node[vertex] (B1) at (-1.25,-2) {};
\node[vertex] (B2) at ( 1.25,-2) {};
\draw[thick] (B1)--(A)--(B2);

\end{tikzpicture}
\end{tabular}
&
\begin{tabular}{c}
\begin{tikzpicture}[scale=0.5]
\tikzstyle{vertex}=[inner sep=2pt,thick,shape=circle,draw=black,fill=white];

\draw[thick,densely dotted,fill=black!15!white,rounded corners,shift={(2.5,0)}]
 (-1.25,-2) -- (-.75,-2) -- (-0.25,-4.5) -- (-2.25,-4.5) -- (-1.75,-2) -- (-1.25,-2);
 \node at (1.25,-3.75) {$T_2$};

\draw[thick,densely dotted,fill=black!15!white,rounded corners]
 (-1.25,-2) -- (-.75,-2) -- (-0.25,-4.5) -- (-2.25,-4.5) -- (-1.75,-2) -- (-1.25,-2);
 \node at (-1.25,-3.75) {$T_1$};

\node[vertex]  (A) at ( 0, 0) {};
\node[vertex] (B1) at (-1.25,-2) {};
\node[vertex] (B2) at ( 1.25,-2) {};
\draw[thick] (B1)--(A)--(B2);

\end{tikzpicture}
\end{tabular}

\\ \hline
$e$&$SW_{T_1}$&$D((W_{T_1}){\otimes}(W_{T_2}))$ \\ \hline
\end{tabular}
\caption{Rules for forming the word for $T$.}
\label{fig:word}
\end{figure}

\begin{itemize}
\item \emph{End} ($e$). Neither child has a subtree attached. Then $W_T=e$.
\item \emph{Single ($S$)}. Single child has a subtree attached, which we denote $T_1$. Then $W_T=SW_{T_1}$.
\item \emph{Double ($D$)}. Both children have subtrees attached, which we denote $T_1$ and $T_2$, respectively.  Then $W_T=D((W_{T_1}){\otimes}(W_{T_2}))$.
\end{itemize}

Finally, the \emph{extended word of $T$} is given by $iW_T$ (where we think $i$ stands for \emph{initial}). As an example, the extended word for the tree $T$ in Figure~\ref{fig:ornamented} is $iSSD((e){\otimes}(Se))$.

Since every possible situation that can occur in a full binary tree has been accounted for when making the word, every full binary tree has an extended word. We note in passing that words for a given tree may not be unique, e.g.\ swapping the words from branching will not change the word so that $T$ from Figure~\ref{fig:ornamented} could also be $iSSD((Se){\otimes}(e))$. This will not be an issue for our results.

\subsection{Equivalence of characteristic polynomials}
With the description of our objects of interest in place we are ready to state our main result for the normalized adjacency matrix $\mathcal{A}$.

\begin{theorem}\label{thm:Agraphs}
Let $T$ be a full binary tree, and let $p,p'\ge1$, $q,q'\ge2$ with $p+q=p'+q'$. Then the $(p,q)$ ornamented $T$ is cospectral with the $(p',q')$ ornamented $T$ for $\mathcal{A}$.
\end{theorem}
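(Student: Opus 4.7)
Compute both characteristic polynomials using Proposition~\ref{prop:allC}, restricted via Lemma~\ref{lem:usefulC} to \emph{simple} cycle decompositions in which each $K_{p,q}$ block contributes at most one internal edge. Then organize the resulting sum along the block structure of the ornamented tree and show, by induction along the word $W_T$, that the expression depends only on $n=p+q$.

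\textbf{Per-block enumeration.} For each block $B$, list the seven possible simple states: the empty state (no edge) and six edge types classified by which of the three shared vertices $\triangle, +, -$ are endpoints of the edge (none, exactly one of the three, or $\triangle$ paired with $+$ or with $-$). For each state, compute the local weight $\mu_B$ as the product of the multiplicity among equivalent choices of new vertices, the sign $-1$ (for the cycle count), a factor $1/\deg$ for each new-vertex endpoint, and a factor $x$ for each unused new vertex. The characteristic polynomial becomes
\[
p_{(p,q)}(x) = \sum_{\pi} \prod_B \mu_B(\pi) \prod_v \sigma_v(\pi),
\]
summed over compatible pattern assignments $\pi$, where the bond factor $\sigma_v$ at a shared vertex $v$ equals $x$ if $v$ is unused, $1/\deg(v)$ if $v$ is used in exactly one adjacent block, and $0$ if $v$ is used by both blocks (an invalid configuration).

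\textbf{Tree dynamic programming.} Because blocks meet at non-root internal vertices of $T$, the sum factors along the block tree. For each subtree I would define a partial sum polynomial indexed by the usage status of its inter-block shared vertices, having already absorbed the $\sigma$ factors of the terminal shared vertices (the root and the leaves of $T$). Following the recursive word $W_T$: the base case $W_T=e$ is a direct enumeration of the seven patterns of a single block; the operations $S$ and $D$ combine one, respectively two, child-subtree partial sums with the parent block via one $\sigma_v$ contraction (with $\deg(v)=p+q=n$ at every non-root internal vertex). A final contraction with $\sigma_\triangle$ at the root (where $\deg=q$) yields the characteristic polynomial.

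\textbf{Main obstacle.} The intermediate partial sums are genuinely $(p,q)$-dependent, so the $(p,q)$-independence of the final answer emerges only through nontrivial cancellations. The base case already displays the mechanism: for $K_{p,q}$ the seven-pattern sum collapses because $(p-1)(q-2)+(q-2)+2(p-1)+2=pq$, so the coefficient $-(1/pq)\cdot pq=-1$ is $(p,q)$-free and one recovers $x^{n-2}(x^2-1)$. The technical heart of the proof is an analogous cancellation inside the $S$ and $D$ recursions: when two subtree partial sums are contracted through the bond factor $1/n$, the $(p,q)$-dependent residues (rational functions in $p,q$ with poles only at $p=0$ or $q=0$) must cancel exactly, leaving an expression rational only in $n$. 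Identifying and maintaining an inductive invariant on the partial sums that is precise enough to force this cancellation at each $S$ and $D$ step is the main work of the proof.
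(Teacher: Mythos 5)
Your setup matches the paper's proof almost exactly: restrict to cycle decompositions with at most one internal edge per block via Lemma~\ref{lem:usefulC}, enumerate the seven per-block states, and run a dynamic program along the word $W_T$ in which each subtree carries a seven-component partial sum indexed by the state of the shared vertex. Up to that point you have reconstructed the paper's transfer-matrix formulation (its matrices $i_{p,q}$, $S_{p,q}$, $D_{p,q}$, $e_{p,q}$ are precisely your contractions). The base-case identity $(p-1)(q-2)+(q-2)+2(p-1)+2=pq$ is also correct.

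The gap is that you have named the hard step without solving it. You correctly observe that the intermediate partial sums are genuinely $(p,q)$-dependent, but the invariant you then gesture at --- that each $S$ or $D$ contraction cancels the $(p,q)$-dependence and leaves something rational in $n$ alone --- is not the right one and would fail: the partial-sum vectors remain $(p,q)$-dependent all the way up the tree, and only the final scalar is independent of $(p,q)$. The paper's resolution is an explicit $7\times 7$ intertwining matrix $U_{p,q}$ satisfying $U_{p,q}e_{1,p+q-1}=e_{p,q}$, $U_{p,q}S_{1,p+q-1}=S_{p,q}U_{p,q}$, $U_{p,q}D_{1,p+q-1}=D_{p,q}(U_{p,q}\otimes U_{p,q})$, and $i_{1,p+q-1}=i_{p,q}U_{p,q}$; the correct inductive invariant is that the $(p,q)$ partial-sum vector equals $U_{p,q}$ applied to the $(1,p+q-1)$ partial-sum vector, and $U_{p,q}$ is absorbed only at the very last contraction with the root. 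Finding that change of basis (and verifying the four intertwining identities, which is where all the rational-function cancellation actually lives) is the substance of the proof, so as written your proposal is an accurate outline of the strategy but not yet a proof.
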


As an example, the three ornamented trees in Figure~\ref{fig:ornamented} are cospectral. The proof will have two steps. First, we show how to compute the characteristic polynomial for these graphs from a product of matrices based on the extended word. Second, we show that the result of this product, and hence characteristic polynomials, are the same for both graphs. From Theorem~\ref{thm:Agraphs} we will get the following.

\begin{corollary}
Let $\alpha\ge3$ be fixed. Then there exists exponentially many spectral faux trees on $n=1+\alpha k$ (as a function of $k$) vertices.
\end{corollary}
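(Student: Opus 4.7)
The plan is a direct application of Theorem~\ref{thm:Agraphs}. Given $\alpha\ge 3$, I set $(p,q)=(2,\alpha-1)$ and $(p',q')=(1,\alpha)$. Since $\alpha\ge 3$ forces $q=\alpha-1\ge 2$, both pairs satisfy $p,p'\ge 1$, $q,q'\ge 2$ and $p+q=p'+q'=\alpha+1$. For any full binary tree $T$ with at least one internal vertex, Theorem~\ref{thm:Agraphs} then yields that the $(2,\alpha-1)$ ornamented $T$ is $\mathcal{A}$-cospectral with the $(1,\alpha)$ ornamented $T$. The observation preceding the theorem shows the former is a non-tree (since $p=2\ne 1$, so each block contains a 4-cycle) while the latter is a tree, so the $(2,\alpha-1)$ ornamented $T$ is a spectral faux tree.

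Next I would verify the size by a short bookkeeping argument. A full binary tree with $k$ internal vertices has $k+1$ leaves, hence $2k+1$ vertices in total. Replacing each internal vertex by its $K_{p,q}$ block introduces $(p-1)+(q-2)=p+q-3$ new vertices beyond the three $\triangle$, ${+}$, ${-}$ already present, so the ornamented tree has $(2k+1)+k(p+q-3)=1+k(p+q-1)=1+\alpha k$ vertices, matching $n$.

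For the exponential count, I would invoke the fact that the number of non-isomorphic full binary trees on $k$ internal vertices grows exponentially in $k$ (the Wedderburn--Etherington numbers, asymptotically $\rho^k$ with $\rho\approx 2.48$). The step I expect to be the main (if minor) obstacle is checking that the assignment $T\mapsto$ the $(2,\alpha-1)$-ornamented $T$ is injective on isomorphism classes. This should follow by recognizing the $k$ blocks intrinsically from the ornamented graph: the blocks are $K_{2,\alpha-1}$ subgraphs, they are edge-disjoint by construction, and adjacent blocks share exactly one vertex (a $\triangle$-type vertex, whose role can be detected from local degree data since its neighbors split into one parent-block and one child-block), so contracting the blocks recovers $T$. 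Even if this injectivity were to lose a polynomial factor in $k$, the exponential growth of the Wedderburn--Etherington numbers would still furnish exponentially many distinct spectral faux trees on $n=1+\alpha k$ vertices, proving the corollary.
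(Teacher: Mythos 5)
Your proposal is correct and follows essentially the same route as the paper: pair the $(1,\alpha)$ and $(2,\alpha-1)$ ornamented versions of each full binary tree via Theorem~\ref{thm:Agraphs}, count the underlying binary trees (the paper cites Catalan numbers where you cite Wedderburn--Etherington; both give exponential growth and yours is arguably the more precise count of isomorphism classes), and argue injectivity by recovering $T$ from the block structure of the ornamented graph. The only cosmetic difference is that the paper handles the degenerate ``no branching'' tree as a separate case in the injectivity argument, whereas your block-contraction phrasing absorbs it.
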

\begin{proof}
The number of full binary trees on $n=1+2k$ grows exponentially (they are counted by the Catalan numbers \cite{stanley}). For each such full binary tree $T$, we construct the $(1,\alpha)$ ornamented $T$ and the $(2,\alpha-1)$ ornamented $T$. The first is a tree, while the second is not a tree (since it has a $4$-cycle). Both graphs have $n=1+\alpha k$ vertices, and so we have created as many spectral faux trees on $n=1+\alpha k$ as there are full binary trees on $n=1+2k$ vertices.

For completeness, we should verify that no two of the $(2,\alpha-1)$ ornamented trees are isomorphic. To do this we first observe that we can identify all of the $K_{2,\alpha-1}$ blocks in the graph and how they are connected to one another. If the tree $T$ has any vertex with two subtrees off the children, we can identify such vertices in the ornamented tree, e.g.\ these would be blocks with two blocks connected to the same part. In particular, with such a block we can identify the ``parent'' vertex, and then using that, trace back to find the root vertex of $T$, and more generally construct $T$. This covers all but one binary tree $T$, namely the one without any branching to two subtrees. For this final one there would not be any block with two blocks connected to the same part, which handles the last case as well.
\end{proof}

\begin{proof}[Proof of Theorem~\ref{thm:Agraphs}]
Since a $(p,q)$ ornamented $T$ can be described as a collection of edge disjoint complete graphs, we can use Lemma~\ref{lem:usefulC} to simplify the computation of the characteristic polynomial to considering cycle decompositions where any \emph{internal cycles} of each $K_{p,q}$ block must consist of at most one edge. In addition, there cannot be any cycles which involve edges from several of the $K_{p,q}$ (if there were this would give a cycle in the underlying complete full binary tree $T$, which does not exist). So, when computing the characteristic polynomials for our family of graphs, we can focus on finding all possible ways of picking at most one edge in each block, and for each such way, determining the contribution to the characteristic polynomial.

We begin by identifying the different ways we can select (or not select) a single edge in any one of our complete bipartite graphs. Referencing the notation in Figure~\ref{fig:oneblock}, we compile the polynomial contributions in Table~\ref{tab:contributions}. 

Each $K_{p,q}$ in the $(p,q)$ ornamented $T$ has $p+q$ vertices, however some vertices might be shared between two such $K_{p,q}$ (e.g.\ ``$\triangle$'' at one level with either ``${+}$'' or ``${-}$'' in the level above). We will deal with this situation by treating each new $K_{p,q}$ as introducing $p+q-1$ vertices (the non-``$\triangle$''). So, the polynomial contributions in Table~\ref{tab:contributions} all have an extra factor of $x^{p+q-3}$ which we have factored out (the exponent reflecting that we have $p+q-1$ vertices and then two of them will be involved in an edge).

\begin{table}[!htb]
\centering
\begin{tabular}{|c|c|c|}\hline
\begin{tabular}{c}
Relationship\\ with $\{\triangle,{+},{-}\}$
\end{tabular}&
Visual
&Polynomial contribution\\ \hline
no edge &
\begin{tabular}{c}
\begin{tikzpicture}[scale=0.75,transform shape]
\tikzstyle{vertex}=[inner sep=2pt,thick,shape=circle,draw=black,fill=white];

\draw[ultra thick, fill=black!10!white, draw = black!30!white,rounded corners] (0.5,0.25) rectangle (2,-0.25);
\node[vertex] (l1) at (0.25,0) {};
\node[vertex] (l2) at (0.75,0) {};
\node[vertex] (l3) at (1.75,0) {};
\node at (1.25,0) {$\cdots$};
\node at (0.25,0.3) {$\triangle$};

\draw[ultra thick, fill=black!10!white, draw = black!30!white,rounded corners] (0.75,-0.75) rectangle (2.25,-1.25);
\node[vertex] (r1) at (0,-1) {};
\node[vertex] (r2) at (0.5,-1) {};
\node[vertex] (r3) at (1,-1) {};
\node[vertex] (r4) at (2,-1) {};
\node at (1.5,-1) {$\cdots$};
\node at (0,-1.3) {${-}$};
\node at (0.5,-1.3) {${+}$};

\draw[thick,densely dotted] (r1)--(l1)--(r2) (l2)--(r3)--(l3)--(r4)--(l2)  (r3)--(l1)--(r4) (l2)--(r1)--(l3) (l2)--(r2)--(l3);
\end{tikzpicture}
\end{tabular}
& $x^2$ \\ \hline
$\emptyset$&

\begin{tabular}{c}
\begin{tikzpicture}[scale=0.75,transform shape]
\tikzstyle{vertex}=[inner sep=2pt,thick,shape=circle,draw=black,fill=white];

\draw[ultra thick, fill=black!10!white, draw = black!30!white,rounded corners] (0.5,0.25) rectangle (2,-0.25);
\node[vertex] (l1) at (0.25,0) {};
\node[vertex] (l2) at (0.75,0) {};
\node[vertex] (l3) at (1.75,0) {};
\node at (1.25,0) {$\cdots$};
\node at (0.25,0.3) {$\triangle$};

\draw[ultra thick, fill=black!10!white, draw = black!30!white,rounded corners] (0.75,-0.75) rectangle (2.25,-1.25);
\node[vertex] (r1) at (0,-1) {};
\node[vertex] (r2) at (0.5,-1) {};
\node[vertex] (r3) at (1,-1) {};
\node[vertex] (r4) at (2,-1) {};
\node at (1.5,-1) {$\cdots$};
\node at (0,-1.3) {${-}$};
\node at (0.5,-1.3) {${+}$};

\draw[thick,densely dotted] (r1)--(l1)--(r2) (l2)--(r3)--(l3)--(r4)--(l2)  (r3)--(l1)--(r4) (l2)--(r1)--(l3) (l2)--(r2)--(l3);
\draw[line width=3pt, color=red, opacity=0.7] (l2)--(r3);
\draw[line width=1pt] (l2)--(r3);
\end{tikzpicture}
\end{tabular}
&$-\dfrac{(p-1)(q-2)}{pq}$\\ \hline

${+}$&
\begin{tabular}{c}
\begin{tikzpicture}[scale=0.75,transform shape]
\tikzstyle{vertex}=[inner sep=2pt,thick,shape=circle,draw=black,fill=white];

\draw[ultra thick, fill=black!10!white, draw = black!30!white,rounded corners] (0.5,0.25) rectangle (2,-0.25);
\node[vertex] (l1) at (0.25,0) {};
\node[vertex] (l2) at (0.75,0) {};
\node[vertex] (l3) at (1.75,0) {};
\node at (1.25,0) {$\cdots$};
\node at (0.25,0.3) {$\triangle$};

\draw[ultra thick, fill=black!10!white, draw = black!30!white,rounded corners] (0.75,-0.75) rectangle (2.25,-1.25);
\node[vertex] (r1) at (0,-1) {};
\node[vertex] (r2) at (0.5,-1) {};
\node[vertex] (r3) at (1,-1) {};
\node[vertex] (r4) at (2,-1) {};
\node at (1.5,-1) {$\cdots$};
\node at (0,-1.3) {${-}$};
\node at (0.5,-1.3) {${+}$};

\draw[thick,densely dotted] (r1)--(l1)--(r2) (l2)--(r3)--(l3)--(r4)--(l2)  (r3)--(l1)--(r4) (l2)--(r1)--(l3) (l2)--(r2)--(l3);
\draw[line width=3pt, color=red, opacity=0.7] (r2)--(l2);
\draw[line width=1pt] (l2)--(r2);
\end{tikzpicture}
\end{tabular}
&$-\dfrac{(p-1)}{\deg({+})q}$\\ \hline

${-}$&
\begin{tabular}{c}
\begin{tikzpicture}[scale=0.75,transform shape]
\tikzstyle{vertex}=[inner sep=2pt,thick,shape=circle,draw=black,fill=white];

\draw[ultra thick, fill=black!10!white, draw = black!30!white,rounded corners] (0.5,0.25) rectangle (2,-0.25);
\node[vertex] (l1) at (0.25,0) {};
\node[vertex] (l2) at (0.75,0) {};
\node[vertex] (l3) at (1.75,0) {};
\node at (1.25,0) {$\cdots$};
\node at (0.25,0.3) {$\triangle$};

\draw[ultra thick, fill=black!10!white, draw = black!30!white,rounded corners] (0.75,-0.75) rectangle (2.25,-1.25);
\node[vertex] (r1) at (0,-1) {};
\node[vertex] (r2) at (0.5,-1) {};
\node[vertex] (r3) at (1,-1) {};
\node[vertex] (r4) at (2,-1) {};
\node at (1.5,-1) {$\cdots$};
\node at (0,-1.3) {${-}$};
\node at (0.5,-1.3) {${+}$};

\draw[thick,densely dotted] (r1)--(l1)--(r2) (l2)--(r3)--(l3)--(r4)--(l2)  (r3)--(l1)--(r4) (l2)--(r1)--(l3) (l2)--(r2)--(l3);
\draw[line width=3pt, color=red, opacity=0.7] (r1)--(l2);
\draw[line width=1pt] (l2)--(r1);
\end{tikzpicture}
\end{tabular}
&$-\dfrac{(p-1)}{\deg({-})q}$\\ \hline

$\triangle$&
\begin{tabular}{c}
\begin{tikzpicture}[scale=0.75,transform shape]
\tikzstyle{vertex}=[inner sep=2pt,thick,shape=circle,draw=black,fill=white];

\draw[ultra thick, fill=black!10!white, draw = black!30!white,rounded corners] (0.5,0.25) rectangle (2,-0.25);
\node[vertex] (l1) at (0.25,0) {};
\node[vertex] (l2) at (0.75,0) {};
\node[vertex] (l3) at (1.75,0) {};
\node at (1.25,0) {$\cdots$};
\node at (0.25,0.3) {$\triangle$};

\draw[ultra thick, fill=black!10!white, draw = black!30!white,rounded corners] (0.75,-0.75) rectangle (2.25,-1.25);
\node[vertex] (r1) at (0,-1) {};
\node[vertex] (r2) at (0.5,-1) {};
\node[vertex] (r3) at (1,-1) {};
\node[vertex] (r4) at (2,-1) {};
\node at (1.5,-1) {$\cdots$};
\node at (0,-1.3) {${-}$};
\node at (0.5,-1.3) {${+}$};

\draw[thick,densely dotted] (r1)--(l1)--(r2) (l2)--(r3)--(l3)--(r4)--(l2)  (r3)--(l1)--(r4) (l2)--(r1)--(l3) (l2)--(r2)--(l3);
\draw[line width=3pt, color=red, opacity=0.7] (l1)--(r3);
\draw[line width=1pt] (l1)--(r3);
\end{tikzpicture}
\end{tabular}
&$-\dfrac{(q-2)}{\deg(\triangle)p}$\\ \hline

$\triangle,{+}$&
\begin{tabular}{c}
\begin{tikzpicture}[scale=0.75,transform shape]
\tikzstyle{vertex}=[inner sep=2pt,thick,shape=circle,draw=black,fill=white];

\draw[ultra thick, fill=black!10!white, draw = black!30!white,rounded corners] (0.5,0.25) rectangle (2,-0.25);
\node[vertex] (l1) at (0.25,0) {};
\node[vertex] (l2) at (0.75,0) {};
\node[vertex] (l3) at (1.75,0) {};
\node at (1.25,0) {$\cdots$};
\node at (0.25,0.3) {$\triangle$};

\draw[ultra thick, fill=black!10!white, draw = black!30!white,rounded corners] (0.75,-0.75) rectangle (2.25,-1.25);
\node[vertex] (r1) at (0,-1) {};
\node[vertex] (r2) at (0.5,-1) {};
\node[vertex] (r3) at (1,-1) {};
\node[vertex] (r4) at (2,-1) {};
\node at (1.5,-1) {$\cdots$};
\node at (0,-1.3) {${-}$};
\node at (0.5,-1.3) {${+}$};

\draw[thick,densely dotted] (r1)--(l1)--(r2) (l2)--(r3)--(l3)--(r4)--(l2)  (r3)--(l1)--(r4) (l2)--(r1)--(l3) (l2)--(r2)--(l3);
\draw[line width=3pt, color=red, opacity=0.7] (l1)--(r2);
\draw[line width=1pt] (l1)--(r2);
\end{tikzpicture}
\end{tabular}
&$-\dfrac{1}{\deg(\triangle)\deg({+})}$\\ \hline

$\triangle,{-}$&
\begin{tabular}{c}
\begin{tikzpicture}[scale=0.75,transform shape]
\tikzstyle{vertex}=[inner sep=2pt,thick,shape=circle,draw=black,fill=white];

\draw[ultra thick, fill=black!10!white, draw = black!30!white,rounded corners] (0.5,0.25) rectangle (2,-0.25);
\node[vertex] (l1) at (0.25,0) {};
\node[vertex] (l2) at (0.75,0) {};
\node[vertex] (l3) at (1.75,0) {};
\node at (1.25,0) {$\cdots$};
\node at (0.25,0.3) {$\triangle$};

\draw[ultra thick, fill=black!10!white, draw = black!30!white,rounded corners] (0.75,-0.75) rectangle (2.25,-1.25);
\node[vertex] (r1) at (0,-1) {};
\node[vertex] (r2) at (0.5,-1) {};
\node[vertex] (r3) at (1,-1) {};
\node[vertex] (r4) at (2,-1) {};
\node at (1.5,-1) {$\cdots$};
\node at (0,-1.3) {${-}$};
\node at (0.5,-1.3) {${+}$};

\draw[thick,densely dotted] (r1)--(l1)--(r2) (l2)--(r3)--(l3)--(r4)--(l2)  (r3)--(l1)--(r4) (l2)--(r1)--(l3) (l2)--(r2)--(l3);
\draw[line width=3pt, color=red, opacity=0.7] (l1)--(r1);
\draw[line width=1pt] (l1)--(r1);
\end{tikzpicture}
\end{tabular}
&$-\dfrac{1}{\deg(\triangle)\deg({-})}$\\ \hline

\end{tabular}
\caption{The possible polynomial contributions of edges inside of each block. Each polynomial contribution has an extra $x^{p+q-3}$ from isolated vertices that has been pulled out.}
\label{tab:contributions}
\end{table}

Looking at the entries in Table~\ref{tab:contributions}, the $x^2$ term corresponds with the situation where no edge is used. For all other terms,  ``$-$'' terms come from the edge corresponding to one cycle (thus contributing a $(-1)$ in the $(-1)^{\cy(C)}$ in \eqref{prop:allC}). The numerator represents the number of such possible edges of a given form. The denominator represents the degree of the vertices involved in the edges. For this last point we note that $\deg(\triangle)$, $\deg({+})$, and $\deg({-})$ can have varying degrees depending on whether the vertices connect with another block.

There is one subtlety in the table, and that is for edges which involve ``$\triangle$''. As mentioned above, we introduce $p+q-1$ vertices in each $K_{p,q}$ and since an edge involving ``$\triangle$'' only uses one of the non-``$\triangle$'' vertices it would seem that there should be an extra factor of $x^{p+q-2}$ instead of $x^{p+q-3}$. However, in this case, we also have that in the other $K_{p,q}$ which involved that ``$\triangle$'' (either as a ``${+}$'' or ``${-}$'') that ``$\triangle$'' made a contribution of $x$ which should not have occurred. So, what we need to do is compensate, which we do by adding an extra term of $1/x$. Combining this with the $x^{p+q-2}$ term brings us to $x^{p+q-3}$, as indicated in the caption of Table~\ref{tab:contributions}. 

Given the polynomial contribution of each block, the contribution for the whole cycle decomposition is now the product of all of the individual contributions.  The main difficulty remaining is that the choice of edge in one $K_{p,q}$ can have an effect on the choice of edge in an adjacent $K_{p,q}$ and this must be taken into account. To do this we think about forming a walk between blocks where the edge used in the current blocks informs our choice of which edges can be selected in the next block. As an example, if we have an edge using the vertex ``${-}$'', then if a block connects on that vertex, that block cannot use any edge involving ``$\triangle$''; otherwise there is no restriction. A similar statement holds for the vertex ``${+}$''.

We now introduce a collection of matrices, $i_{p,q}$, $S_{p,q}$, $D_{p,q}$, and $e_{p,q}$, that capture both the contribution from choice of edge and allowable transitions to the next block. The indexing of the matrices is the same as that of Table~\ref{tab:contributions}, and our choice of names (deliberately) bears a resemblance to extended words. For the following we will let $\mathbf{e}=[1\,1\,1\,1\,1\,1\,1]$ and $\mathbf{t}=[1\,1\,1\,1\,0\,0\,0]$ (which represent transitioning to all possible choices for edges in the next step, and transitioning to choosing edges that don't involve ``$\triangle$'', respectively), while $\otimes$ will represent the tensor product.
\begin{gather*}
\frac{i_{p,q}}{x}=\begin{bmatrix}
1&1&1&1&\tfrac{p+q}q&\tfrac{p+q}q&\tfrac{p+q}q
\end{bmatrix}
\\[10pt]
\frac{S_{p,q}}{x^{p+q-3}}=
\begin{bmatrix}
x^2&0&0&0&0&0&0\\
0&-\tfrac{(p-1)(q-2)}{pq}&0&0&0&0&0\\
0&0&-\tfrac{p-1}{pq}&0&0&0&0\\
0&0&0&-\frac{p-1}{q(p+q)}&0&0&0\\
0&0&0&0&-\frac{q-2}{p(p+q)}&0&0\\
0&0&0&0&0&-\frac{1}{p(p+q)}&0\\
0&0&0&0&0&0&-\frac{1}{(p+q)^2}
\end{bmatrix}
\begin{bmatrix}
\mathbf{e}\\
\mathbf{e}\\
\mathbf{e}\\
\mathbf{t}\\
\mathbf{e}\\
\mathbf{e}\\
\mathbf{t}
\end{bmatrix}
\\[10pt]
\frac{D_{p,q}}{x^{p+q-3}}=
\begin{bmatrix}
x^2&0&0&0&0&0&0\\
0&-\tfrac{(p-1)(q-2)}{pq}&0&0&0&0&0\\
0&0&-\tfrac{p-1}{q(p+q)}&0&0&0&0\\
0&0&0&-\frac{p-1}{q(p+q)}&0&0&0\\
0&0&0&0&-\frac{q-2}{p(p+q)}&0&0\\
0&0&0&0&0&-\frac{1}{(p+q)^2}&0\\
0&0&0&0&0&0&-\frac{1}{(p+q)^2}
\end{bmatrix}
\begin{bmatrix}
\mathbf{e} \otimes \mathbf{e}\\
\mathbf{e} \otimes \mathbf{e}\\
\mathbf{t} \otimes \mathbf{e}\\
\mathbf{e} \otimes \mathbf{t}\\
\mathbf{e} \otimes \mathbf{e}\\
\mathbf{t} \otimes \mathbf{e}\\
\mathbf{e} \otimes \mathbf{t}
\end{bmatrix}
\\[10pt]
\frac{e_{p,q}}{x^{p+q-3}}=
\begin{bmatrix}
x^2&-\tfrac{(p-1)(q-2)}{pq} &
-\tfrac{p-1}{pq} &
-\frac{p-1}{pq} &
-\frac{q-2}{p(p+q)} &
-\frac{1}{p(p+q)} &
-\frac{1}{p(p+q)}
\end{bmatrix}^T
\end{gather*}

\begin{claim}
Given an extended word for the full binary tree $T$, the characteristic polynomial of the $(p,q)$ ornamented $T$ is found by taking the extended word for $T$ and replacing the letters $i$, $S$, $D$, and $e$ with the matrices $i_{p,q}$, $S_{p,q}$, $D_{p,q}$, and $e_{p,q}$, respectively, and carrying out the matrix multiplication. For either a word or extended word $W$ we denote the corresponding matrix product as $W_{p,q}$.
\end{claim}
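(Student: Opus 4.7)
My plan is to prove the Claim by induction on the structure of the full binary tree $T$, mirrored by induction on the parse of its extended word. The starting point is Lemma~\ref{lem:usefulC}: the characteristic polynomial is a sum over cycle decompositions in which each $K_{p,q}$ block contributes at most a single edge, and no cycle spans multiple blocks (the underlying binary tree is acyclic). Within a single block there are exactly the seven mutually exclusive cases listed in Table~\ref{tab:contributions}, and the $7\times 7$ matrices $S_{p,q}$ and $D_{p,q}$ are built so that the $j$-th diagonal entry records the intrinsic polynomial contribution of the $j$-th case (with the $x^{p+q-3}$ factor accounting for the $p+q-1$ new vertices introduced per block, minus the two vertices covered by the selected edge), while the attached right-hand vectors $\mathbf{e}$ or $\mathbf{t}$ encode which of the seven cases are \emph{compatible} in an adjacent child block.

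The crucial observation is that a block's $+$ or $-$ vertex becomes the $\triangle$ of the child block attached to it; hence whenever the parent's edge touches that vertex, the child cannot also pick an edge involving its own $\triangle$, which is exactly what $\mathbf{t}=[1,1,1,1,0,0,0]$ forbids (turning off the last three edge types, all of which touch $\triangle$). Conversely, if the parent's edge avoids that vertex, the child subtree is unrestricted and receives $\mathbf{e}$. Assembling these vectors row by row into $S_{p,q}$ (single subtree, so one transition vector) and $D_{p,q}$ (two subtrees, so the transitions tensor: $\mathbf{e}\otimes\mathbf{e}$, $\mathbf{t}\otimes\mathbf{e}$, or $\mathbf{e}\otimes\mathbf{t}$ depending on which side an edge at $-$ or $+$ constrains) realizes row-by-column summation over locally compatible choices; the matrix product corresponding to the word therefore collects precisely one summand per valid cycle decomposition, which is the sum from Lemma~\ref{lem:usefulC}.

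For the base case, the word $ie$ corresponds to a single $K_{p,q}$ block, and the scalar $i_{p,q}\, e_{p,q}$ must reproduce the characteristic polynomial of the $(p,q)$ ornamented single-vertex tree (essentially $K_{p,q}$ up to degree conventions), which is a direct check against the seven entries of $e_{p,q}$ weighted by the row $i_{p,q}$. For the inductive step, assume the claim for $T_1$ and $T_2$; if $W_T=SW_{T_1}$ then multiplying $S_{p,q}$ on the left of $(W_{T_1})_{p,q}$ adds one new block above $T_1$ with the correct diagonal weights and correctly selects compatible initial choices of $T_1$; the $D$-case is analogous with the tensor splitting the independent constraints imposed on the two subtrees. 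The main obstacle I expect is the bookkeeping surrounding shared vertices and their degrees. An edge touching $\triangle$ of a block uses only one of the $p+q-1$ new vertices, and $\triangle$ was itself contributed by the parent block (where it appeared as $+$ or $-$), so one must compensate a factor of $1/x$ and cancel a spurious $x$ from the parent; simultaneously the denominators $\deg(\triangle)$, $\deg(+)$, $\deg(-)$ are $p+q$ when the vertex bridges two blocks and $p$ or $q$ when it lies at a leaf. Ensuring that the constants of $i_{p,q}$ (for the root, which has no parent block) and $e_{p,q}$ (for the leaves, which have no further child blocks), together with the diagonal constants of $S_{p,q}$ and $D_{p,q}$, telescope to reproduce the correct degree factor at every vertex of the ornamented tree is essentially a case analysis across the seven possibilities, but the structure is forced once Table~\ref{tab:contributions} is fixed.
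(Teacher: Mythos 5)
Your proposal follows essentially the same route as the paper: structural induction on the (extended) word, with a seven-entry vector tracking the edge choice in the root block, the $\mathbf{e}$/$\mathbf{t}$ vectors encoding which choices remain legal in a child block (since the parent's ``${+}$'' or ``${-}$'' is the child's ``$\triangle$''), and $i_{p,q}$ supplying the root's $x$ factor and degree correction. The one point to state precisely is that the inductive invariant is the vector statement for \emph{non-extended} words (entries indexed by the seven cases, computed under the convention that the root's ``$\triangle$'' has degree $p+q$), so the base case is $e_{p,q}$ itself rather than the scalar $i_{p,q}e_{p,q}$ --- which is exactly how the paper organizes the argument.
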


To verify this claim we first show that for non-extended words that the resulting column vector is a polynomial whose entries reflect the contribution to the characteristic polynomial starting from where we assume the root has degree $p+q$ instead of degree $q$ and the entries indexed with the seven choices that are given in Table~\ref{tab:contributions}. By construction this holds for $e_{p,q}$ (which we can treat as our base case $W=e$).

Now suppose it holds for a tree with word $W$, and consider the tree with word $SW$. This translates to the product $S_{p,q}W_{p,q}$. The matrix $S_{p,q}$ works by first taking the contribution of each possible choice and then for each possible legal transition multiplying the corresponding entry of $W_{p,q}$, and summing the possible results. So the result also holds for $SW$. [By choice of matrix we have that $S_{p,q}$ represents connecting to the subtree via ``${-}$''.]

Now, suppose it holds for the trees with words $W$ and $W'$, and consider the tree with word $D((W){\otimes}(W'))$. This translates to the product $D_{p,q}(W_{p,q} \otimes W_{p,q}')$. The key is to note that tensor products takes all possible pairs of entries of $f_1\otimes f_2$, and in particular that it can be though of as a Cartesian product in lexicographical order for the 
\[
(\text{edge choice of block through ``${+}$''},\text{edge choice of block through ``${-}$''}).
\]
Now, as before, the multiplication by $D_{p,q}$ works by first taking the contribution of each possible choice, and then for each possible legal transition to a pair multiplying the corresponding entry of $W_{p,q}\otimes W_{p,q}'$, and summing the possible results. So the result also holds for $D((W){\otimes}(W'))$. [By choice of matrix we have that $D_{p,q}$ represents connecting to the subtree via ``${+}$'' using $W_{p,q}$  and the subtree via ``${-}$'' using $W_{p,q}'$. This also explains the format of the rows in the definition $D_{p,q}$, namely $\mathbf{t}\otimes\mathbf{e}$ is used when transitioning from an edge which uses ``${+}$'' and so cannot transition to a pair where the first entry uses ``$\triangle$''; similarly $\mathbf{e}\otimes\mathbf{t}$ is used when transitioning from an edge which uses ``${-}$'' and so cannot transition to a pair where the second entry uses ``$\triangle$''.]

This completes the induction for words, and leaves us to handle the extended words. In particular, the addition of the $i_{p,q}$ at the start must handle three issues: (1) we must produce a single polynomial instead of a column of polynomials; (2) we must take into account the initial vertex's contribution; (3) we must correct (where needed) that the root has degree $q$ and not degree $p+q$. For the third item, this will only affect the column entries which used ``$\triangle$'' in an edge, in which case we can multiply by $(p+q)/q$ to correct the product of degrees. For the second item, this is solved by multiplying \emph{all} of the column entries by $x$ (recall that in our setup, we always think of our blocks as introducing $p+q-1$ new vertices, and so the root and its contribution also needs to be included). We mention in passing that it would seem that we should only multiply those entries representing a situation not involving ``$\triangle$''; however as discussed earlier, there was already a correction term built into those cases. Finally, to add the results, we can multiply by an all-$1$s row vector. Combining all three of these operations gives the matrix $i_{p,q}$, and this finishes the claim.

With the claim established, the last ingredient for the proof is the following matrix.
\[
U_{p,q}=\begin{bmatrix}
1 & 0 & 0 & 0 & 0 & 0 & 0 \\
0 & 1 & 0 & 0 & a & 0 & 0 \\
0 & 0 & 1 & 0 & b & c & 0 \\
0 & 0 & 0 & 1 & b & 0 & c \\
0 & 0 & 0 & 0 & d & 0 & 0 \\
0 & 0 & 0 & 0 & e & f & 0 \\
0 & 0 & 0 & 0 & e & 0 & f
\end{bmatrix}
\]
where
\begin{align*}
a &= \frac{(p+q)(p-1)(q-2)}{pq(p+q-3)}
&
b&=\frac{(p+q)(p-1)^2}{pq(p+q-1)(p+q-3)}
&
c&=\frac{(p+q)(p-1)}{p(p+q-1)}\\
d&=\frac{q-2}{p(p+q-3)}
&
e&=\frac{p-1}{p(p+q-1)(p+q-3)}
&
f&=\frac{q}{p(p+q-1)}.
\end{align*}
This matrix has the following properties, which can be  checked by matrix multiplication.
\begin{align}
i_{1,p+q-1}&=i_{p,q}U_{p,q} \label{U1}\\
U_{p,q}S_{1,p+q-1}&=S_{p,q}U_{p,q} \label{U2}\\
U_{p,q}D_{1,p+q-1}&=D_{p,q}(U_{p,q} \otimes U_{p,q}) \label{U3}\\
U_{p,q}e_{1,p+q-1}&=e_{p,q} \label{U4}
\end{align}

\begin{claim}
The $(1,p+q-1)$ ornamented $T$ is cospectral with the $(p,q)$ ornamented $T$.
\end{claim}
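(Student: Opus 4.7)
The plan is to prove, by structural induction on the word $W$ for $T$, the intertwining identity
\[
W_{p,q} \;=\; U_{p,q}\, W_{1,p+q-1},
\]
where $W_{p,q}$ is the column vector obtained by substituting $S_{p,q}, D_{p,q}, e_{p,q}$ for the letters of $W$. Once this is established, the theorem follows at once from identity (\ref{U1}): for the extended word $iW$ we have
\[
(iW)_{p,q} \;=\; i_{p,q}\, W_{p,q} \;=\; i_{p,q}\, U_{p,q}\, W_{1,p+q-1} \;=\; i_{1,p+q-1}\, W_{1,p+q-1} \;=\; (iW)_{1,p+q-1},
\]
and by the previous claim both sides equal the characteristic polynomials of the corresponding ornamented trees, so the $(p,q)$ and $(1,p+q-1)$ ornamentations are cospectral.

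For the induction itself, the base case $W = e$ is exactly identity (\ref{U4}). For $W = SW'$, the inductive hypothesis gives $W'_{p,q} = U_{p,q} W'_{1,p+q-1}$, so
\[
W_{p,q} \;=\; S_{p,q}\, W'_{p,q} \;=\; S_{p,q}\, U_{p,q}\, W'_{1,p+q-1} \;=\; U_{p,q}\, S_{1,p+q-1}\, W'_{1,p+q-1} \;=\; U_{p,q}\, W_{1,p+q-1},
\]
where the middle equality is identity (\ref{U2}). For $W = D\bigl((W_1){\otimes}(W_2)\bigr)$, the inductive hypothesis applied to each subword together with the mixed-product property $(A\otimes B)(C\otimes D) = (AC)\otimes(BD)$ yields
\[
(W_1)_{p,q} \otimes (W_2)_{p,q} \;=\; (U_{p,q}\otimes U_{p,q})\bigl((W_1)_{1,p+q-1}\otimes (W_2)_{1,p+q-1}\bigr);
\]
pushing this through $D_{p,q}$ and applying identity (\ref{U3}) then gives $W_{p,q} = U_{p,q} W_{1,p+q-1}$, as required.

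The real work has already been done, and the main obstacle is not this induction but the verification of identities (\ref{U1})--(\ref{U4}), which the authors flag as ``checked by matrix multiplication''. In other words, $U_{p,q}$ acts as a similarity-style intertwiner between the two substitution schemes, and one must confirm that the explicit entries $a,b,c,d,e,f$ chosen in $U_{p,q}$ are precisely the ones making all four relations hold simultaneously; the subtle one is (\ref{U3}), where the tensor factorization must be compatible with both the $\mathbf{t}\otimes\mathbf{e}$ and $\mathbf{e}\otimes\mathbf{t}$ rows of $D_{p,q}$ at once. Once those four identities are granted, the induction above is essentially automatic and gives the cospectrality of the $(1,p+q-1)$ and $(p,q)$ ornamented $T$.
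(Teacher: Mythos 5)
Your proposal is correct and follows essentially the same route as the paper: an induction on the word establishing the intertwining identity $W_{p,q}=U_{p,q}W_{1,p+q-1}$ via (\ref{U4}), (\ref{U2}), (\ref{U3}) and the mixed-product property, capped off by (\ref{U1}) to equate the two characteristic polynomials. The paper merely adds a worked example with the word $iSSD((Se){\otimes}(e))$ before running the same induction.
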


Since being cospectral is an equivalence relationship, once this claim is established, then the proof of Theorem \ref{thm:Agraphs} is done. We quickly demonstrate the idea with an example using the word for the tree in Figure~\ref{fig:ornamented}, namely $iSSD((Se){\otimes}(e))$.  Recall from the previous claim that to compute the characteristic polynomial, we translate the word into a product of matrices. Using the various properties of $U_{p,q}$ given above together with basics of tensor products (e.g.\ $(A\otimes B)(C\otimes D)=(AC)\otimes (BD)$) we have
\begin{align*}
&\hspace{-40pt}i_{1,p+q-1}S_{1,p+q-1}S_{1,p+q-1}D_{1,p+q-1}((S_{1,p+q-1}e_{1,p+q-1})\otimes(e_{1,p+q-1}))\\
&=i_{p,q}U_{p,q}S_{1,p+q-1}S_{1,p+q-1}D_{1,p+q-1}((S_{1,p+q-1}e_{1,p+q-1})\otimes(e_{1,p+q-1}))\\
&=i_{p,q}S_{p,q}U_{p,q}S_{1,p+q-1}D_{1,p+q-1}((S_{1,p+q-1}e_{1,p+q-1})\otimes(e_{1,p+q-1}))\\
&=i_{p,q}S_{p,q}S_{p,q}U_{p,q}D_{1,p+q-1}((S_{1,p+q-1}e_{1,p+q-1})\otimes(e_{1,p+q-1}))\\
&=i_{p,q}S_{p,q}S_{p,q}D_{p,q}(U_{p,q}\otimes U_{p,q})((S_{1,p+q-1}e_{1,p+q-1})\otimes(e_{1,p+q-1}))\\
&=i_{p,q}S_{p,q}S_{p,q}D_{p,q}((U_{p,q}S_{1,p+q-1}e_{1,p+q-1})\otimes(U_{p,q}e_{1,p+q-1}))\\
&=i_{p,q}S_{p,q}S_{p,q}D_{p,q}((S_{p,q}U_{p,q}e_{1,p+q-1})\otimes(e_{p,q}))\\
&=i_{p,q}S_{p,q}S_{p,q}D_{p,q}((S_{p,q}e_{p,q})\otimes(e_{p,q})).
\end{align*}

The general case is again done by induction on the length of the word working from the end. It works for the word $e$ by \eqref{U4}. If it works for $W$, then for $SW$ along with \eqref{U2} we have that 
\[
U_{p,q}S_{1,p+q-1}W_{1,p+q-1}=S_{p,q}U_{p,q}W_{1,p+q-1}
\]
and the induction hypothesis finishes the case. If it works for the words $W$ and $W'$, then for $D((W){\otimes}(W'))$ along with \eqref{U3} we have that
\begin{align*}
U_{p,q}D_{1,p+q-1}((W_{1,p+q-1})\otimes(W_{1,p+q-1}'))&=
D_{p,q}(U_{p,q}\otimes U_{p,q})((W_{1,p+q-1})\otimes(W_{1,p+q-1}'))\\&=
D_{p,q}((U_{p,q}W_{1,p+q-1})\otimes(U_{p,q}W_{1,p+q-1}'))
\end{align*}
and again the induction hypothesis finishes the case. Finally, the $i_{p,q}$ term at the start of the extended word helps to introduce the $U_{p,q}$ term to get the process started for any word by \eqref{U1}. So the claim, and hence the theorem is verified.
\end{proof}

\subsection{Comments about $k$-ary trees}

Instead of considering only binary trees one could consider ornamenting $k$-ary trees. The first non-binary case of this is shown in Figure~\ref{fig:tri} which has cospectral graphs. The general approach would be similar to what we have outlined here (though now there would be multiple different matrices associated with the splitting). However, even if this was carried out, we would still only have families on vertices of the form $n=1+\alpha k$ for $\alpha\ge3$.

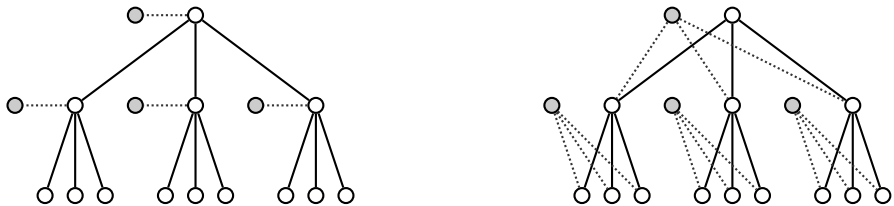
\begin{figure}[!htb]
\centering
\begin{tikzpicture}[scale=0.8]
\tikzstyle{vertex}=[inner sep=2pt,thick,shape=circle,draw=black];
\tikzstyle{ornament}=[inner sep=2pt,thick,shape=circle,draw=black,fill=black!20!white];
    \node[vertex] (A) at (0,0) {};
    \node[vertex] (B) at (-2,-1.5) {};
    \node[vertex] (C) at (0,-1.5) {};
    \node[vertex] (D) at (2,-1.5) {};
    \node[vertex] (B1) at (-2.5,-3) {};
    \node[vertex] (B2) at (-2,-3) {};
    \node[vertex] (B3) at (-1.5,-3) {};
    \node[vertex] (C1) at (-0.5,-3) {};
    \node[vertex] (C2) at (0,-3) {};
    \node[vertex] (C3) at (0.5,-3) {};
    \node[vertex] (D1) at (1.5,-3) {};
    \node[vertex] (D2) at (2,-3) {};
    \node[vertex] (D3) at (2.5,-3) {};
    \node[ornament] (AA) at (-1,0) {};
    \node[ornament] (BB) at (-3,-1.5) {};
    \node[ornament] (CC) at (-1,-1.5) {};
    \node[ornament] (DD) at (1,-1.5) {};
    
    \draw[thick] (A)--(B) (A)--(C) (A)--(D) (B)--(B1) (B)--(B2) (B)--(B3) (C)--(C1) (C)--(C2) (C)--(C3) (D)--(D1) (D)--(D2) (D)--(D3);
    
    \draw[ thick,color=black!80!white, densely dotted] (A)--(AA) (B)--(BB) (C)--(CC) (D)--(DD);

\end{tikzpicture}
\hfil
\begin{tikzpicture}[scale=0.8]
\tikzstyle{vertex}=[inner sep=2pt,thick,shape=circle,draw=black];
\tikzstyle{ornament}=[inner sep=2pt,thick,shape=circle,draw=black,fill=black!20!white];
    \node[vertex] (A) at (0,0) {};
    \node[vertex] (B) at (-2,-1.5) {};
    \node[vertex] (C) at (0,-1.5) {};
    \node[vertex] (D) at (2,-1.5) {};
    \node[vertex] (B1) at (-2.5,-3) {};
    \node[vertex] (B2) at (-2,-3) {};
    \node[vertex] (B3) at (-1.5,-3) {};
    \node[vertex] (C1) at (-0.5,-3) {};
    \node[vertex] (C2) at (0,-3) {};
    \node[vertex] (C3) at (0.5,-3) {};
    \node[vertex] (D1) at (1.5,-3) {};
    \node[vertex] (D2) at (2,-3) {};
    \node[vertex] (D3) at (2.5,-3) {};
    \node[ornament] (AA) at (-1,0) {};
    \node[ornament] (BB) at (-3,-1.5) {};
    \node[ornament] (CC) at (-1,-1.5) {};
    \node[ornament] (DD) at (1,-1.5) {};
    
    \draw[thick] (A)--(B) (A)--(C) (A)--(D) (B)--(B1) (B)--(B2) (B)--(B3) (C)--(C1) (C)--(C2) (C)--(C3) (D)--(D1) (D)--(D2) (D)--(D3);
    
    \draw[ thick,color=black!80!white, densely dotted] (AA)--(B) (AA)--(C) (AA)--(D) (BB)--(B1) (BB)--(B2) (BB)--(B3) (CC)--(C1) (CC)--(C2) (CC)--(C3) (DD)--(D1) (DD)--(D2) (DD)--(D3);

\end{tikzpicture}

\caption{The first ornamented $k$-ary pair that does not come from some binary tree; the two graphs are cospectral with respect to $\mathcal{A}$.}
\label{fig:tri}
\end{figure}

So, constructions of the type discussed in this section will be least informative for the case when $n=1+p$ where $p$ is a prime as there are no ornamented binary trees available (beyond complete bipartite graphs). Reconsidering Table~\ref{tab:AA}, it is of interest to note that for the line looking at the counts of spectral faux trees for $\mathcal{A}$, that the big drops seem to occur for such values of $n$ (e.g.\ $n=8$, $12$, $14$).

\section{Conclusion}\label{sec:conclusion}
We have gone from the question of Schwenk ``Can you hear the shape of a tree?'' to ``Can you hear whether a graph is a tree?'' For the adjacency, the commonality of spectral faux trees gives an answer of ``almost never''. For the Laplacian the answer is ``yes'' while for the signless Laplacian the answer is ``frequently'' (and probably almost always).

For the normalized adjacency our current best answer is ``maybe''. The issue is that while there do exist spectral faux trees for small values of $n$, there do not appear to be many of them (and the number of spectral faux trees also seems to be quite sporadic as seen in Table~\ref{tab:AA}). We have constructed a family that grows exponentially for the normalized adjacency. However, this is a highly specialized family and still leaves gaps for a large number of values of $n$. This leads to the following open question.

\begin{question}
Do almost all trees have a cospectral mate which is a non-tree for the normalized adjacency matrix?
\end{question}

If the answer to this question is ``yes'', one might try to approach it by finding a small pair of graphs, one a tree and one a non-tree, which can have an arbitrary graph glued onto a single vertex (as we did with the adjacency) or a group of vertices (as we did with the signless Laplacian). However, up through $n=14$ vertices, no such pair exists. (If such a pair had existed, the construction of an exponentially large cospectral family would have been much simpler.) Much work remains to be done for understanding the normalized adjacency matrix.

\subsection*{Acknowledgments}
This research was conducted primarily at the 2022 Iowa State University Math REU which was supported through NSF Grant DMS-1950583.

\bibliographystyle{plain}
\bibliography{bibliography}

\begin{thebibliography}{10}

\bibitem{haemers}
Andries~E. Brouwer and Willem~H. Haemers.
\newblock {\em Spectra of graphs}.
\newblock Universitext. Springer, New York, 2012.

\bibitem{brualdi}
Richard~A. Brualdi and Herbert~J. Ryser.
\newblock {\em Combinatorial matrix theory}, volume~39 of {\em Encyclopedia of
  Mathematics and its Applications}.
\newblock Cambridge University Press, Cambridge, 1991.

\bibitem{butler}
Steve Butler.
\newblock A jaunt in spectral graph theory.
\newblock In {\em 50 years of combinatorics, graph theory, and computing},
  Discrete Math. Appl. (Boca Raton), pages 213--237. CRC Press, Boca Raton, FL,
  [2020] \copyright 2020.

\bibitem{coalescing}
Steve Butler, Elena D'{A}vanzo, Rachel Heikkinen, Joel Jeffries, Alyssa
  Kruczek, and Harper Niergarth.
\newblock Complements of coalescing sets.
\newblock Preprint.

\bibitem{heysse}
Steve Butler and Kristin Heysse.
\newblock A cospectral family of graphs for the normalized {L}aplacian found by
  toggling.
\newblock {\em Linear Algebra Appl.}, 507:499--512, 2016.

\bibitem{signless_matrix_tree}
Keivan Hassani~Monfared and Sudipta Mallik.
\newblock An analog of matrix tree theorem for signless {L}aplacians.
\newblock {\em Linear Algebra Appl.}, 560:43--55, 2019.

\bibitem{kac}
Mark Kac.
\newblock Can one hear the shape of a drum?
\newblock {\em Amer. Math. Monthly}, 73(4, part II):1--23, 1966.

\bibitem{schwenk}
Allen~J. Schwenk.
\newblock Almost all trees are cospectral.
\newblock In {\em New directions in the theory of graphs ({P}roc. {T}hird {A}nn
  {A}rbor {C}onf., {U}niv. {M}ichigan, {A}nn {A}rbor, {M}ich., 1971)}, pages
  275--307. Academic Press, New York, 1973.

\bibitem{stanley}
Richard~P. Stanley.
\newblock {\em Catalan numbers}.
\newblock Cambridge University Press, New York, 2015.

\bibitem{vandam}
Edwin~R. van Dam and Willem~H. Haemers.
\newblock Which graphs are determined by their spectrum?
\newblock volume 373, pages 241--272. 2003.
\newblock Special issue on the Combinatorial Matrix Theory Conference (Pohang,
  2002).

\end{thebibliography}

\end{document}